\newcolumntype{V}{>{\raggedright\arraybackslash} m{.47\linewidth}}
\newdimen\pHeight
\newdimen\pLower
\newdimen\pLineWidth
\newdimen\pKern
\newdimen\pIR
\newsavebox{\Cbox}
\newsavebox{\vertCmplx}
\newdimen\Cheight
\newdimen\Cwidth
\sbox{\Cbox}{\rm C}
\sbox{\vertCmplx}{\rule[\pLower]{\pLineWidth}{\Cheight}}
\sbox{\Cbox}{\usebox{\Cbox}\kern\pKern\usebox{\vertCmplx}}
\def\R{{\rm I\kern\pIR R}}
\newcommand{\inv}[2]{\mbox{${#1} \in
\R^{\hspace*{-.01in} #2}$}}  
\newcommand{\inm}[3]
   {\mbox{${#1} \in \R^{\hspace*{-.015in}{#2} \times
{#3}}\hspace*{-.05in}$ }}
\newcommand{\inTens}[4]
   {\mbox{${\cal #1}_{#2} \in \R^{\hspace*{-.015in}{#3} \times \cdots \times {#4}} \hspace*{-.05in}$ }}
\newcommand{\T}{\raisebox{1pt}{$ \:\otimes \:$}}
\newenvironment{code}{\begin{tabbing}\hspace*{.3in} \=
\hspace*{.3in} \= \hspace*{.3in} \= \hspace*{.3in} \=
\hspace*{.3in} \= \hspace*{.3in} \= \kill }{\end{tabbing}}
\title{Block Tensor Unfoldings}
\author{Stefan Ragnarsson\thanks{Center for Applied Mathematics, Cornell University 
        Ithaca, NY 14853, {\tt str23@cornell.edu}}
\and Charles F. Van Loan\thanks{Department of Computer Science, Cornell University,
        Ithaca, NY 14853, {\tt cv@cs.cornell.edu}. Both authors
are supported in part by NSF contract DMS-1016284.}
}
\begin{document}
\maketitle
\begin{abstract}
Within the field of numerical multilinear algebra, block tensors are 
increasingly important. 
Accordingly, it is appropriate to develop an infrastructure that supports reasoning about block tensor computation. In this paper we establish concise notation that is suitable for the analysis and development of block tensor algorithms, prove several useful block tensor identities, and make precise the notion of a block tensor unfolding.
\end{abstract}

\begin{keywords} 
tensor, matrix computations, blocking, 
\end{keywords}

\begin{AMS}
15A69
\end{AMS}
% 15A18 : Eigenvalues, singular values, and eigenvectors
% 15A69:ÊMultilinear algebra, tensor products
% 65F30 : Other matrix algorithms
% 65F15 : Eigenvalues, eigenvectors (numerical linear algebra)
\pagestyle{myheadings}
\thispagestyle{plain}
\markboth{S. RAGNARSSON and C.F. VAN LOAN}{Block Tensor Unfoldings}

\newcommand{\smallN}{\scriptscriptstyle N}
\newcommand{\smallM}{\scriptscriptstyle M}
\newcommand{\smallB}{\scriptscriptstyle B}
\newcommand{\smallR}{\scriptscriptstyle R}
\newcommand{\smallC}{\scriptscriptstyle C}
\newcommand{\smallD}{\scriptscriptstyle D}
\newcommand{\smallS}{\scriptscriptstyle{S}}
\newcommand{\smallT}{\scriptscriptstyle{T}}
\newcommand{\smallI}{\scriptscriptstyle{I}}
\newcommand{\smallJ}{\scriptscriptstyle{J}}
\newcommand{\smallLambda}{\scriptscriptstyle \Lambda}
\newcommand{\smallPsi}{\scriptscriptstyle \Psi}
\newcommand{\br}{\mathbf{r}}
\newcommand{\bc}{\mathbf{c}}

\newcommand{\boldalpha}{\mbox{\boldmath$\alpha$}}
\newcommand{\boldbeta}{\mbox{\boldmath$\beta$}}
\newcommand{\bolddelta}{\mbox{\boldmath$\delta$}}
\newcommand{\boldlambda}{\mbox{\boldmath$\lambda$}}
\newcommand{\boldpsi}{\mbox{\boldmath$\psi$}}
\newcommand{\boldell}{\mbox{\boldmath$\ell$}}

\section{Introduction}

The field of  matrix computations has matured to the point that it is not necessary
to provide scalar-level verifications of basic block-level operations. For example, if
\[
\left[ \begin{array}{cc} C_{11} & C_{12} \\ C_{21} & C_{22} \rule{0pt}{12pt}\end{array} \right]
\:=\:
\left[ \begin{array}{cc} A_{11} & A_{12} \\ A_{21} & A_{22} \rule{0pt}{12pt}\end{array} \right]^{T}
\left[ \begin{array}{cc} B_{11} & B_{12} \\ B_{21} & B_{22} \rule{0pt}{12pt}\end{array} \right],
\]
then without ``$ijk$ proof'' it is \underline{understood}  that $C_{12} \:=\: A_{11}^{T}B_{12} + A_{21}^{T}B_{22}$ provided $A$ and $B$ are partitioned conformally.
``Understandings'' like this contribute
to
the culture of block matrix computations, enabling researchers to 
think at a high level when they are developing new algorithms and proofs.

It is our contention that the emerging field of tensor computations needs to develop
a similar infrastructure that gracefully supports block tensor operations.
By a {\em block tensor } we mean a tensor whose entries are themselves 
tensors. As with matrices,
the act of blocking a tensor is the act of partitioning the index range vectors
associated with each dimension.
Thus, if \inv{{\cal A}}{9 \times 5 \times 8} and
\[
\begin{array}{lclcl}
1:9 &=& \left[ \begin{array}{cc|ccc|cccc} \!1\!&\!2&3\!&\!4\!&\!5&6\!&\!7\!&\!8\!&\!9\!\end{array} \right]&=&
\left[ \begin{array}{c|c|c} 1:2 & 3:5 & 6:9 \end{array} \right] \\
1:5 &=& \left[ \begin{array}{ccc|cc} \!1\!&\!2\!&3&4\!&\!5\!\end{array} \right] &=&
\left[ \begin{array}{c|c} 1:3 & 4:5  \end{array} \right] \rule{0pt}{13pt}\\
1:8 &=& \left[ \begin{array}{cc|cc|cc|cc} \!1\!&\!2&3\!&\!4&5\!&\!6&7\!&\!8\end{array} \right]&=&\left[ \begin{array}{c|c|c|c} 1:2 & 3:4 & 5:6 & 7:8 \end{array} \right], \rule{0pt}{13pt}
\end{array}
\]
then we are choosing to regard $\cal A$ as a 3-by-2-by-4 block tensor with block dimensions that are determined by the indicated partitionings of $1:9$, $1:5$, and $1:8$. The 
colon notation can be used to specify the blocks. For example, the (2,1,3) block 
${\cal A}_{213}$, is prescribed by
${\cal A}(3:5,1:3,5:6)$.

Block tensors  are  increasingly important for the same reasons that 
block matrices are  increasingly important:
\begin{enumerate}
  \item {\em Structure.} Block-level sparsity is a common pattern because of nearest-neighbor coupling and other reasons \cite{sym}.
  \item {\em Generalization.} Block versions of point algorithms frequently have attractive features \cite{blockPara}.
  \item {\em Performance.} Blocking is the key to minimizing the overhead of communication \cite{MinComm}.
  \end{enumerate} 
Indeed, there is a very strong coupling between block tensor computations and block matrix 
computations. This is because
the dominant  paradigm for tensor computation involves the device of
 {\em unfolding.} 
An unfolded (or flattened) tensor is a matrix obtained by systematically reorganizing  the tensor's entries into a 2-dimensional array. In this framework, computations on a tensor $\cal A$ reduce to
 matrix computations on one or more of its unfoldings. For example, the higher-order singular value decomposition of a tensor involves computing the SVD of each modal unfolding \cite{HOSVD}.
See \cite{TammySirev} for a nice overview of tensor decompositions and unfoldings.

Given all the advantages  that result when a matrix computation is organized at the block level, 
it makes sense for an unfolding of a block tensor $\cal A$ to have a related block structure of
its own. In particular, $\cal A$'s blocks should map to contiguous blocks in the unfolding.
This is not the case when a typical ``vec-oriented'' unfolding is invoked \cite{TammySirev}. Consider the mode-1 unfolding ${\cal A}_{(1)}$ of a 9-by-5-by-8 tensor $\cal A$ with blocking (1.1). The unfolding, which is displayed in Fig 1.1,  is a 9-by-40 matrix
whose $i$-th row is 
$\mbox{vec}({\cal A}(i,:,:))^{T}$. (Recall that vec-of-a-matrix
is the vector obtained by stacking its columns.) 
\begin{figure}[t]
\scalebox{.8}
{
\begin{picture}(300,150)(-20,0)
\put(67,115){\thicklines\vector(-2,-3){40}}
\put(67,115){\thicklines\vector(1,-4){14}}
\put(40,120){\small ${\cal A}_{311} = {\cal A}(6:9,1:3,1:2)$}

\put(210,115){\thicklines\vector(2,-1){64}}
\put(210,115){\thicklines\vector(1,-2){15}}
\put(180,120){\small ${\cal A}_{213} = {\cal A}(3:5,1:3,5:6)$}

\put(348,115){\thicklines\vector(1,-4){3.5}}
\put(348,115){\thicklines\vector(4,-1){50}}
\put(320,120){\small ${\cal A}_{124} = {\cal A}(1:2,4:5,7:8)$}

\put(18,16){\circle*{7}} 
\put(18,26){\circle*{7}} 
\put(18,36){\circle*{7}} 
\put(18,46){\circle*{7}}
\put(28,16){\circle*{7}} 
\put(28,26){\circle*{7}} 
\put(28,36){\circle*{7}} 
\put(28,46){\circle*{7}}
\put(38,16){\circle*{7}} 
\put(38,26){\circle*{7}} 
\put(38,36){\circle*{7}} 
\put(38,46){\circle*{7}}
\put(68,16){\circle*{7}} 
\put(68,26){\circle*{7}} 
\put(68,36){\circle*{7}} 
\put(68,46){\circle*{7}}
\put(78,16){\circle*{7}} 
\put(78,26){\circle*{7}} 
\put(78,36){\circle*{7}} 
\put(78,46){\circle*{7}}
\put(88,16){\circle*{7}} 
\put(88,26){\circle*{7}} 
\put(88,36){\circle*{7}} 
\put(88,46){\circle*{7}}

\put(218,75){\circle*{7}}
\put(228,75){\circle*{7}}
\put(238,75){\circle*{7}}
\put(268,75){\circle*{7}}
\put(278,75){\circle*{7}}
\put(288,75){\circle*{7}}
\put(218,65){\circle*{7}}
\put(228,65){\circle*{7}}
\put(238,65){\circle*{7}}
\put(268,65){\circle*{7}}
\put(278,65){\circle*{7}}
\put(288,65){\circle*{7}}
\put(218,55){\circle*{7}}
\put(228,55){\circle*{7}}
\put(238,55){\circle*{7}}
\put(268,55){\circle*{7}}
\put(278,55){\circle*{7}}
\put(288,55){\circle*{7}}

\put(348,85){\circle*{7}}
\put(348,95){\circle*{7}}
\put(358,85){\circle*{7}}
\put(358,95){\circle*{7}}
\put(398,85){\circle*{7}}
\put(398,95){\circle*{7}}
\put(408,85){\circle*{7}}
\put(408,95){\circle*{7}}

\put(10,10){   \line(1,0){400}}
\put(10,20){   \line(1,0){400}}
\put(10,30){  \line(1,0){400}}
\put(10,40){  \line(1,0){400}}
\put(10,50){  \line(1,0){400}}
\put(10,60){  \line(1,0){400}}
\put(10,70){  \line(1,0){400}}
\put(10,80){  \line(1,0){400}}
\put(10,90){  \line(1,0){400}}
\put(10,100){  \line(1,0){400}}
\put(10,10){  \line(0,1){90}}
\put(20,10){  \line(0,1){90}}
\put(30,10){  \line(0,1){90}}
\put(40,10){  \line(0,1){90}}
\put(50,10){  \line(0,1){90}}
\put(60,10){  \line(0,1){90}}
\put(70,10){  \line(0,1){90}}
\put(80,10){  \line(0,1){90}}
\put(90,10){  \line(0,1){90}}
\put(100,10){  \line(0,1){90}}
\put(110,10){  \line(0,1){90}}
\put(120,10){  \line(0,1){90}}
\put(130,10){  \line(0,1){90}}
\put(140,10){  \line(0,1){90}}
\put(150,10){  \line(0,1){90}}
\put(160,10){  \line(0,1){90}}
\put(170,10){  \line(0,1){90}}
\put(180,10){  \line(0,1){90}}
\put(190,10){  \line(0,1){90}}
\put(200,10){  \line(0,1){90}}
\put(210,10){  \line(0,1){90}}
\put(220,10){  \line(0,1){90}}
\put(230,10){  \line(0,1){90}}
\put(240,10){  \line(0,1){90}}
\put(250,10){  \line(0,1){90}}
\put(260,10){  \line(0,1){90}}
\put(270,10){  \line(0,1){90}}
\put(280,10){  \line(0,1){90}}
\put(290,10){  \line(0,1){90}}
\put(300,10){  \line(0,1){90}}
\put(310,10){  \line(0,1){90}}
\put(320,10){  \line(0,1){90}}
\put(330,10){  \line(0,1){90}}
\put(340,10){  \line(0,1){90}}
\put(350,10){  \line(0,1){90}}
\put(360,10){  \line(0,1){90}}
\put(370,10){  \line(0,1){90}}
\put(380,10){  \line(0,1){90}}
\put(390,10){  \line(0,1){90}}
\put(400,10){  \line(0,1){90}}
\put(410,10){  \line(0,1){90}}
\end{picture}
}
\caption{A vec-ordered, mode-1 unfolding of \inv{\cal A}{9\times 5 \times 8} with blocking (1.1)}
\end{figure}
Notice that in the unfolding, $\cal A$'s flattened blocks are not contiguous.
The primary purpose of this paper is to show how to permute the rows and columns of a vec-oriented unfolding so that its blocks are unfoldings  of the tensor blocks.
An example of such an unfolding is displayed in Fig 1.2. 
\begin{figure}
\scalebox{.8}
{
\begin{picture}(300,120)(-20,-10)
\put(-10,87){\large 2 $\left\{ \rule{0pt}{10pt}\right.$}
\put(-10,63){\large 3 $\left\{ \rule{0pt}{12pt}\right.$}
\put(-10,27){\large 4 $\left\{ \rule{0pt}{16pt}\right.$}

\put(18,4){$\underbrace{\rule{50pt}{0pt}}$}
\put(41,-14){\large 6}
\put(78,4){$\underbrace{\rule{30pt}{0pt}}$}
\put(90,-14){\large 4}

\put(118,4){$\underbrace{\rule{50pt}{0pt}}$}
\put(141,-14){\large 6}
\put(178,4){$\underbrace{\rule{30pt}{0pt}}$}
\put(190,-14){\large 4}

\put(218,4){$\underbrace{\rule{50pt}{0pt}}$}
\put(241,-14){\large 6}
\put(278,4){$\underbrace{\rule{30pt}{0pt}}$}
\put(290,-14){\large 4}

\put(318,4){$\underbrace{\rule{50pt}{0pt}}$}
\put(341,-14){\large 6}
\put(378,4){$\underbrace{\rule{30pt}{0pt}}$}
\put(390,-14){\large 4}

\put(22,87){ ${(\cal A}_{111})_{_{\scriptscriptstyle(1)}}$}
\put(22,62){ ${(\cal A}_{211})_{_{\scriptscriptstyle(1)}}$}
\put(22,28){ ${(\cal A}_{311})_{_{\scriptscriptstyle(1)}}$}

\put(71,87){ ${(\cal A}_{121})_{_{\scriptscriptstyle(1)}}$}
\put(71,62){ ${(\cal A}_{221})_{_{\scriptscriptstyle(1)}}$}
\put(71,28){ ${(\cal A}_{321})_{_{\scriptscriptstyle(1)}}$}

\put(122,87){$({\cal A}_{112})_{_{\scriptscriptstyle(1)}}$}
\put(122,62){$({\cal A}_{212})_{_{\scriptscriptstyle(1)}}$}
\put(122,28){$({\cal A}_{312})_{_{\scriptscriptstyle(1)}}$}

\put(171,87){ $({\cal A}_{122})_{_{\scriptscriptstyle(1)}}$}
\put(171,62){ $({\cal A}_{222})_{_{\scriptscriptstyle(1)}}$}
\put(171,28){ $({\cal A}_{322})_{_{\scriptscriptstyle(1)}}$}

\put(222,87){ $({\cal A}_{113})_{_{\scriptscriptstyle(1)}}$}
\put(222,62){ $({\cal A}_{213})_{_{\scriptscriptstyle(1)}}$}
\put(222,28){ $({\cal A}_{313})_{_{\scriptscriptstyle(1)}}$}

\put(271,87){ $({\cal A}_{123})_{_{\scriptscriptstyle(1)}}$}
\put(271,62){ $({\cal A}_{223})_{_{\scriptscriptstyle(1)}}$}
\put(271,28){ $({\cal A}_{323})_{_{\scriptscriptstyle(1)}}$}

\put(322,87){$({\cal A}_{114})_{_{\scriptscriptstyle(1)}}$}
\put(322,62){$({\cal A}_{214})_{_{\scriptscriptstyle(1)}}$}
\put(322,28){$({\cal A}_{314})_{_{\scriptscriptstyle(1)}}$}

\put(374,87){$({\cal A}_{124})_{_{\scriptscriptstyle(1)}}$}
\put(374,62){$({\cal A}_{224})_{_{\scriptscriptstyle(1)}}$}
\put(374,28){$({\cal A}_{324})_{_{\scriptscriptstyle(1)}}$}

\put(10,10){   \line(1,0){400}}
%\put(10,20){   \line(1,0){400}}
%\put(10,30){  \line(1,0){400}}
%\put(10,40){  \line(1,0){400}}

\put(10,50){  \line(1,0){400}}
%\put(10,60){  \line(1,0){400}}
%\put(10,70){  \line(1,0){400}}
\put(10,80){  \line(1,0){400}}
% \put(10,90){  \line(1,0){400}}
\put(10,100){  \line(1,0){400}}

\put(10,10){  \line(0,1){90}}
%\put(20,10){  \line(0,1){90}}
%\put(30,10){  \line(0,1){90}}
%\put(40,10){  \line(0,1){90}}
%\put(50,10){  \line(0,1){90}}
%\put(60,10){  \line(0,1){90}}
\put(70,10){  \line(0,1){90}}
%\put(80,10){  \line(0,1){90}}
%\put(90,10){  \line(0,1){90}}
%\put(100,10){  \line(0,1){90}}
\put(110,10){  \line(0,1){90}}
%\put(120,10){  \line(0,1){90}}
%\put(130,10){  \line(0,1){90}}
%\put(140,10){  \line(0,1){90}}
%\put(150,10){  \line(0,1){90}}
%\put(160,10){  \line(0,1){90}}
\put(170,10){  \line(0,1){90}}
%\put(180,10){  \line(0,1){90}}
%\put(190,10){  \line(0,1){90}}
%\put(200,10){  \line(0,1){90}}
\put(210,10){  \line(0,1){90}}
%\put(220,10){  \line(0,1){90}}
%\put(230,10){  \line(0,1){90}}
%\put(240,10){  \line(0,1){90}}
%\put(250,10){  \line(0,1){90}}
%\put(260,10){  \line(0,1){90}}
\put(270,10){  \line(0,1){90}}
%\put(280,10){  \line(0,1){90}}
%\put(290,10){  \line(0,1){90}}
%\put(300,10){  \line(0,1){90}}
\put(310,10){  \line(0,1){90}}
%\put(320,10){  \line(0,1){90}}
%\put(330,10){  \line(0,1){90}}
%\put(340,10){  \line(0,1){90}}
%\put(350,10){  \line(0,1){90}}
%\put(360,10){  \line(0,1){90}}
\put(370,10){  \line(0,1){90}}
%\put(380,10){  \line(0,1){90}}
%\put(390,10){  \line(0,1){90}}
%\put(400,10){  \line(0,1){90}}
\put(410,10){  \line(0,1){90}}
\end{picture}
}
\caption{A ``block vec''-ordered, mode-1 unfolding of \inv{\cal A}{9\times 5 \times 8} with blocking (1.1)}
\end{figure}

The paper is organized as follows. In \S2 we review well-known connections between  $\mbox{vec}(\cdot)$, Kronecker products,  transposition, and the perfect shuffle permutation. A block version of $\mbox{vec}(\cdot)$ is defined in \S3 and a related permutation is used to define the notion of a block unfolding.
In \S4 we show how to formulate a tensor contraction as a block matrix multiplication
using the tools developed.

\section{Basic Notation and Operations
}
If \inTens{A}{}{n_{1}}{n_{d}} and $\mathbf{i} = (i_{1},\ldots,i_{d})$, then 
${\cal A}(\mathbf{i})$ denotes component $(i_{1},\ldots,i_{d})$ of tensor $\cal A$.
We use calligraphic characters to designate tensors and bold lower case characters to
denote  vectors of integers. For ${\cal A}(\mathbf{i})$ to make sense we must have
$1\leq i_{k} \leq n_{k}$ for $k=1:d$, i.e., $\mathbf{1} \leq \mathbf{i} \leq \mathbf{n}$.
In general,  if
$\mathbf{i}$ and $\mathbf{j}$ have equal length, then $\mathbf{i} \leq \mathbf{j}$ means
that $i_{k} \leq j_{k}$ for all $k$.

The \textsc{Matlab} colon notation is used to specify index ranges.
If $a<b$ and $c>0$, then  $a:b$ is the vector $[\:a,a+1,\ldots,b\:]$ and $a:c:b$ is the vector $[\:a,a+c,a+2c,\ldots,a+mc\:]$ where $m = \lfloor(b-a)/c\rfloor$, i.e.~the largest integer that is less than or equal to $(b-a)/c$.

If $A\in \mathbb{R}^{m\times n}$ and $B \in \mathbb{R}^{p\times q}$, then the Kronecker product $A \otimes B \in \mathbb{R}^{mp\times nq}$ is the block matrix
\[
A \otimes B = \left[ \begin{array}{c c c}
a_{11}B & \cdots & a_{1n}B \\
\vdots & \ddots & \vdots \\
a_{m1}B & \cdots & a_{mn}B 
\end{array}\right].
\]
The outer product ${\cal C} = {\cal A} \circ {\cal B}$ of a tensor \inv{\cal A}{j_{1} \times \cdots \times j_{d}}
and a tensor \inv{\cal B}{k_{1} \times \cdots \times k_{e}} is a 
tensor \inv{\cal C}{j_{1} \times \cdots \times j_{d}\times k_{1} \times \cdots \times k_{e}}
defined by
\[
{\cal C}(\mathbf{i}) = {\cal A} (\,\mathbf{i}(1:d)\,) \cdot \mathcal{B} (\,\mathbf{i}(d+1 : d+e)\,)
\qquad  \mathbf{1}\leq \mathbf{i} \leq [\: \mathbf{j} \; \mathbf{k}\: ].
\]
The order of ${\cal A} \circ {\cal B}$ is the order of $\cal A$ plus the order of $\cal B$.
Note that $A \T B$ is an unfolding of the order-4 tensor ${\cal A}\circ {\cal B}$ where
${\cal A}$ and $\cal B$ are order-2 tensors (matrices) $A$ and $B$.

\subsection{The Vec Operation and Ordering}

If \inTens{A}{}{n_{1}}{n_{d}} and $N = n_{1}\cdots n_{d}$, then  
\inv{\mbox{vec}({\cal A})}{\smallN} is a column vector defined recursively by  
\begin{equation}
\mbox{vec}({\cal A}) \;=\; \left[ \begin{array}{c}
\mbox{vec}({\cal A}^{(1)}) \\ 
\vdots \\
\mbox{vec}({\cal A}^{(n_{d})}) \end{array} \right]
\end{equation}
where ${\cal A}^{(k)}$ is the order-$(d-1)$ tensor 
\begin{equation}
{\cal A}^{(k)}(i_{1},\ldots,i_{d-1}) \; =\;{\cal A}(i_{1},\ldots,i_{d-1},k) \qquad 1\leq k \leq n_{d} .
\end{equation}
It is assumed that $\mathbf{1} \leq \mathbf{i}(1:d-1) \leq \mathbf{n}(1:d-1).$
If $d =1 $, then $\cal A$ is a column vector and $\mbox{vec}({\cal A}) = {\cal A}$.
If $d=2$, then $\cal A$ is a matrix and $\mbox{vec}({\cal A})$  stacks its columns.
Each entry in tensor \inTens{A}{}{n_{1}}{n_{d}} corresponds to a component of 
vec$(\cal A )$.
This implicitly defines an index mapping function
$ivec(\,\cdot\,,\mathbf{n})$:
\begin{equation}
ivec(\mathbf{i},\mathbf{n}) \;=\;
i_{1} + (i_{2}-1)n_{1} + (i_{3}-1)n_{1}n_{2} +\cdots + (i_{d}-1) n_{1}\cdots n_{d-1}.
\end{equation}
It is easy to show that if $v = \mbox{vec}({\cal A})$, then
\begin{equation}
 v_{ivec(\mathbf{i},\mathbf{n})} \;=\; {\cal A}(\mathbf{i})
\end{equation}
for all $\mathbf{i}$ that satisfy $\mathbf{1} \leq \mathbf{i} \leq \mathbf{n}$. 

It should be noted that the ``tensor vec'' operation given by (2.1)-(2.4) reverts
to the standard vec operation when $\cal A$ is a matrix \cite{Searle}.

\subsection{Transposition, Vec, Kronecker Products, and Permutation}

There is an important connection between
matrix transposition and perfect shuffle permutations \cite{Searle,HornerJohnson,cvFFT,cvKron}. In particular, if \inm{A}{q}{r}
and $s = qr$,
then
\begin{equation}
\mbox{vec}(A^{T}) \;=\; \Pi_{q,r}^{T} \mbox{vec}(A)
\end{equation}
where \inm{\Pi_{q,r}}{s}{s} is the $(q,r)$ \emph{perfect shuffle} permutation 
defined by
\begin{equation}
\Pi_{q,r} z \;=\;\left[ \begin{array}{c} z(1:r:s) \\ z(2:r:s) \\ \vdots \\ z(r:r:s)
\end{array} \right] \qquad \inv{z}{s}.
\end{equation}
See \cite{cvFFT}.
 If \inm{Z}{r}{q} and $Y = Z^{T}$, then $\mbox{vec}(Y) = \Pi_{q,r}\mbox{vec}(Z)$. It is easy to verify that $\Pi_{q,r}^{T} = \Pi_{r,q}$.

If 
\inv{f}{q} and  \inv{g}{r}, then $g \T f$ is a perfect shuffle of $f \T g$:
\begin{equation}
 \Pi_{q,r} \left( f \T g \right)  \;=\; g \T f.
\end{equation}
An important consequence of this result applies to the case when $g$ is a block vector:
\begin{equation}
\mbox{diag}(\Pi_{\rho_{1},q},\ldots,\Pi_{\rho_{\mu},q})\cdot \Pi_{q,r} \cdot
\left(
f\T \left[ \begin{array}{c} g_{1} \\ \vdots \\ g_{\mu} \end{array} \right]\right)
\;=\;
\left[ \begin{array}{c} f\T g_{1} \\ \vdots \\ f\T g_{\mu} \end{array} \right].
\end{equation}
Here, \inv{g_{i}}{\rho_{i}} and $r = \rho_{1}+\cdots + \rho_{\mu}$.

Tensor transposition can also be characterized in terms of $\mbox{vec}(\cdot )$ and perfect shuffles.
If 
$ \inTens{A}{}{n_{1}}{n_{d}}$ and $\mathbf{p}$
is a permutation of $1:d$, then 
\inv{{\cal A}^{<\mathbf{p}>}}{n_{p_{1}}\times \cdots \times n_{p_{d}}}
denotes the
$\mathbf{p}$-transpose of $\cal A$ and is defined by
\begin{equation}
{\cal A}^{<\mathbf{p}>}(i_{p_{1}},\ldots,i_{p_{d}}) \;=\; {\cal A}(i_{1},\ldots,i_{d}) 
\qquad \mathbf{1} \leq \mathbf{i} \leq \mathbf{n},
\end{equation}
i.e., ${\cal A}^{<\mathbf{p}>}(\mathbf{i}(\mathbf{p})) = {\cal A}(\mathbf{i})$.
The following lemma can be regarded as a generalization of (2.5):

\medskip

\begin{lemma}
If \inv{\cal A}{\smallN_{1}\times \smallN_{2} \times \smallN_{3} \times \smallN_{4}} and 
${\cal B} = {\cal A}^{<[\:1\:3\:2\:4]>}$, then
\[
\mbox{\rm vec}({\cal B}) \;=\; 
(I_{\smallN_{4}} \T \Pi_{\smallN_{3},\smallN_{2}} \T I_{\smallN_{1}})
\mbox{\rm vec}({\cal A}).
\]
\end{lemma}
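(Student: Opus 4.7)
The strategy is to reduce the order-$4$ identity to an order-$3$ identity by peeling off the last index using the recursive definition of $\mathrm{vec}$, and then handle the order-$3$ case by expanding $\mathrm{vec}(\cdot)$ in the standard Kronecker basis and invoking the elementary shuffle identity (2.7).

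First I would observe that the permutation $\mathbf{p} = [1\,3\,2\,4]$ fixes the fourth coordinate, so $\mathcal{B}(i_1,i_3,i_2,i_4) = \mathcal{A}(i_1,i_2,i_3,i_4)$ implies that, for every fixed $k \in 1{:}N_4$, the order-$3$ slice $\mathcal{B}^{(k)}$ (in the sense of (2.2)) is the $[1\,3\,2]$-transpose of $\mathcal{A}^{(k)}$. By the recursion (2.1), $\mathrm{vec}(\mathcal{A})$ is the stack of the vectors $\mathrm{vec}(\mathcal{A}^{(k)})$ and $\mathrm{vec}(\mathcal{B})$ is the stack of the vectors $\mathrm{vec}(\mathcal{B}^{(k)})$. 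Since applying $I_{N_4} \otimes M$ to a stack of $N_4$ equally sized subvectors is the same as applying $M$ to each subvector, the full identity follows once I show the order-$3$ version
\[
\mathrm{vec}(\mathcal{B}^{(k)}) \;=\; (\Pi_{N_3,N_2} \otimes I_{N_1})\,\mathrm{vec}(\mathcal{A}^{(k)})
\qquad k = 1{:}N_4.
\]

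For the order-$3$ case I would iterate the recursion one more level. Writing $\mathcal{A}^{(k)}(:,:,j)$ as the $N_1\times N_2$ matrix whose columns are the vectors $\mathcal{A}(:,i_2,j,k)$, a double application of (2.1)--(2.4) expresses $\mathrm{vec}(\mathcal{A}^{(k)})$ as a sum of elementary Kronecker products,
\[
\mathrm{vec}(\mathcal{A}^{(k)}) \;=\; \sum_{i_1,i_2,i_3} \mathcal{A}(i_1,i_2,i_3,k)\; e_{i_3}^{(N_3)} \otimes e_{i_2}^{(N_2)} \otimes e_{i_1}^{(N_1)},
\]
with the last index $i_3$ appearing in the slowest-varying (leftmost) factor, as dictated by (2.3). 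The analogous expansion for $\mathrm{vec}(\mathcal{B}^{(k)})$, where the roles of $i_2$ and $i_3$ in the slice of $\mathcal{B}^{(k)}$ are swapped, yields $e_{i_2}^{(N_2)} \otimes e_{i_3}^{(N_3)} \otimes e_{i_1}^{(N_1)}$ in place of $e_{i_3} \otimes e_{i_2} \otimes e_{i_1}$, with the same scalar coefficient $\mathcal{A}(i_1,i_2,i_3,k)$.

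The conclusion is then a termwise application of (2.7): with $f = e_{i_3}^{(N_3)}$ and $g = e_{i_2}^{(N_2)}$, one has $\Pi_{N_3,N_2}(e_{i_3}\otimes e_{i_2}) = e_{i_2}\otimes e_{i_3}$, and the tensored $I_{N_1}$ leaves the rightmost factor unchanged. Summing over $(i_1,i_2,i_3)$ gives the order-$3$ identity, which in turn lifts back through the outer $I_{N_4}$ to the required order-$4$ identity.

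The only real obstacle is bookkeeping: one must verify carefully that the recursion (2.1)--(2.2), together with the index formula (2.3), really does put the \emph{last} tensor index in the slowest-varying position, and that this matches the convention $(u\otimes v)_{(i-1)n+j} = u_i v_j$ used in (2.7) --- otherwise the direction of the shuffle $\Pi_{N_3,N_2}$ versus $\Pi_{N_2,N_3}$ will be wrong. Once this ordering convention is nailed down, the rest of the argument is essentially a single application of (2.7) sandwiched between two Kronecker identities.
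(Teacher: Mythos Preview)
Your proposal is correct. The paper does not actually give a proof here---it simply cites references for ``well-known facts that relate Kronecker products, $\mathrm{vec}(\cdot)$, and the perfect shuffle''---so your argument is a legitimate fleshing-out of exactly those facts: the recursive structure of $\mathrm{vec}$ from (2.1)--(2.2), the index ordering from (2.3), and the shuffle identity (2.7). Your caution about matching the direction of $\Pi_{N_3,N_2}$ to the paper's conventions is well placed, and your reading of (2.3) and (2.7) is consistent with them.
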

\begin{proof}
The proof follows from well-known facts that relate Kronecker products, $\mbox{vec}(\cdot )$, and the perfect shuffle. See  \cite{Searle, HornerJohnson, cvFFT}.
\end{proof}

\medskip

\noindent
Although Lemma 2.1 addresses an order-4 transposition, the result can be applied
to tensors  of arbitrary order simply by ``fusing'' adjacent modes. For example, 
suppose \inTens{C}{}{n_{1}}{n_{7}} and set  $N_{1} = n_{1}n_{2}$,
$N_{2} = n_{3}$, $N_{3} = n_{4}n_{5}$, and $N_{4} = n_{6}n_{7}$. Define \inv{\cal A}{\smallN_{1} \times \smallN_{2} \times \smallN_{3} \times \smallN_{4}}
by
\[
{\cal A}(j_{1},j_{2},j_{3},j_{4})\:=\:{\cal C}(\mathbf{i}) \quad \mbox{where} \quad \left\{
\begin{array}{lcl}
j_{1} &=& ivec(\mathbf{i}(1:2),\mathbf{n}(1:2))\\
j_{2} &=& ivec(\mathbf{i}(3:3),\mathbf{n}(3:3)) \rule{0pt}{12pt}\\
j_{3} &=& ivec(\mathbf{i}(4:5),\mathbf{n}(4:5)) \rule{0pt}{12pt}\\
j_{4} &=& ivec(\mathbf{i}(6:7),\mathbf{n}(6:7)) \rule{0pt}{12pt}
\end{array}.
\right.
\]
Observe that $\mbox{vec}({\cal A}) = \mbox{vec}({\cal C})$ and 
\begin{eqnarray*}
(I_{\smallN_{4}} \T \Pi_{\smallN_{3},\smallN_{2}} \T I_{\smallN_{1}})
\mbox{\rm vec}({\cal C}) &=&
(I_{\smallN_{4}} \T \Pi_{\smallN_{3},\smallN_{2}} \T I_{\smallN_{1}})
\mbox{\rm vec}({\cal A})\\
&=&  \mbox{vec}({\cal A}^{<[\:1\: 3 \:2 \:4\:]>})  
\;=\;  \mbox{vec}({\cal C}^{<[\:1\: 2 \:4 \:5\:3\:6\:7\:]>}) . \rule{0pt}{12pt}
\end{eqnarray*}

Two special applications of Lemma 2.1 are worth  noting.
Assume  \inTens{A}{}{n_{1}}{n_{d}}. If
$\mathbf{p} \;=\; [ \:1:k\!-\!1 \:, \: k\!+\!1 \:, \:k\:,\: k\!+\!2:d\:]$,
then
\begin{equation}
\mbox{\rm vec}({\cal A}^{<\mathbf{p}>}) \;=\; (I_{\smallN_{4}} \T \Pi_{n_{k+1},n_{k}} \T I_{\smallN_{1}})
\mbox{\rm vec}({\cal A})
\end{equation}
where $N_{1} = n_{1}\cdots n_{k\!-\!1}$ and $N_{4} = n_{k+2}\cdots n_{d}$. 
This transposition swaps  two adjacent modes, e.g.,
\[
{\cal B} = {\cal A}^{<\:[1\:2\:4\:3\:5\:]\:>} \Rightarrow
{\cal A}(i_{1},i_{2},i_{3},i_{4},i_{5})\; = \;
{\cal B}(i_{1},i_{2},i_{4},i_{3},i_{5}).
\]
On the other hand, 
if  
$\mathbf{p} \;=\; [ \: k \:,\: 1:k-1 \:,\: k+1:d\:]$,
then
\begin{equation}
\mbox{\rm vec}({\cal A}^{<\mathbf{p}>}) \;=\; 
(I_{\smallN_{4}} \T \Pi_{N_{2},n_{k}})
\mbox{\rm vec}({\cal A})
\end{equation}
where $N_{2} = n_{1}\cdots n_{k-1}$ and $N_{4} = n_{k+1}\cdots n_{d}$.
This transposition ``moves'' a designated mode ``to the front,'' e.g.,
\[
{\cal B} = {\cal A}^{<\:[\:3\: 1\:2\:4\:5\:]\:>} \Rightarrow
{\cal A}(i_{1},i_{2},i_{3},i_{4},i_{5})\; = \;
{\cal B}(i_{3},i_{1},i_{2},i_{4},i_{5}).
\]

\subsection{Unfolding a Tensor}
Converting a tensor to a matrix is an important operation in tensor computations \cite{Kiers, KoldaMLOps, koldaToolbox,TammySirev}.
In order to unfold a tensor \inTens{A}{}{n_{1}}{n_{d}} into a matrix, it is  necessary to choose (a) an integer
$e$ that satisfies $1\!\leq\! e \!< \!d$ and  (b) a permutation
$\mathbf{p}$ of $1:d$. 
If
\begin{eqnarray}
\mathbf{r} &=& \mathbf{p}(1:e)\\
\mathbf{c} &=& \mathbf{p}(e+1:d) \rule{0pt}{14pt}
\end{eqnarray}
then the  $\mathbf{r}\times \mathbf{c}$ {\em unfolding} of $\cal A$ is the matrix
${\cal A}_{\mathbf{r}\times \mathbf{c}}$ whose $(\alpha,\beta)$ entry is
given by
\begin{equation}
{\cal A}_{\mathbf{r}\times \mathbf{c}}(\alpha,\beta)
 \;=\; {\cal A}^{<\mathbf{p}>}(i_{1},\ldots,i_{e},j_{1},\ldots,j_{d-e})
\end{equation}
where  
\begin{eqnarray}
\alpha &=& ivec(\mathbf{i},\mathbf{n}(\mathbf{r})) \qquad \mathbf{1} \leq \mathbf{i} \leq \mathbf{n}(\mathbf{r})\\
\beta &=&  ivec(\mathbf{j},\mathbf{n}(\mathbf{c})) \qquad \mathbf{1} \leq \mathbf{j} \leq \mathbf{n}(\mathbf{c}).
\rule{0pt}{13pt}
\end{eqnarray}

Note that ${\cal A}_{\mathbf{r}\times \mathbf{c}}$ has $n_{p_{1}}\cdots n_{p_{e}}$ rows
and $n_{p_{e+1}}\cdots n_{p_{d}}$ columns.
 Each row and column of  
${\cal A}_{\mathbf{r}\times \mathbf{c}}$ is the vec of
a reduced-order subtensor.
In particular, for all $\mathbf{i}$ and $\mathbf{j}$ that satisfy
$\mathbf{1} \leq \mathbf{i} \leq \mathbf{n}(\mathbf{r})$ and
$\mathbf{1} \leq \mathbf{j} \leq \mathbf{n}(\mathbf{c})$, we have
\begin{eqnarray}
{\cal A}_{\mathbf{r}\times \mathbf{c}}(\: ivec(\mathbf{i},\mathbf{n}(\mathbf{r})),\::\:) &=&
\mbox{vec}(\,{\cal R}^{(\mathbf{i})})^{T}\\
{\cal A}_{\mathbf{r}\times \mathbf{c}}(\::\:,ivec(\mathbf{j},\mathbf{n}(\mathbf{c}))\:) &=&
\mbox{vec}(\,{\cal C}^{(\mathbf{j})}) \rule{0pt}{14pt}
\end{eqnarray}
where the tensors ${\cal R}^{(\mathbf{i})}$  and   ${\cal C}^{(\mathbf{j})}$ are
defined by
\begin{eqnarray}
R^{(\mathbf{i})}(\mathbf{j}) &=& {\cal A}^{<\mathbf{p}>}(i_{1},\ldots,i_{e},j_{1},\ldots,j_{d-e})\\
C^{(\mathbf{j})}(\mathbf{i}) &=& {\cal A}^{<\mathbf{p}>}(i_{1},\ldots,i_{e},j_{1},\ldots,j_{d-e})
\rule{0pt}{14pt}.
\end{eqnarray}
Especially important are the modal unfoldings.
If $\mathbf{p} = [\:k \;\: 1:k\!-\!1\;\; k\!+\!1:d\:]$, then 
${\cal A}_{\mathbf{r}\times \mathbf{c}}$ is a {\em mode-$k$ unfolding} of $\cal A$. The columns of this
matrix are referred to as  {\em mode-$e$ fibers} of $\cal A$.  
Special conventions are required if $\cal A$ is to be unfolded to either
a column or row vector. 
If $e = d$, then $\mathbf{c}=\emptyset$ and
${\cal A}_{\mathbf{r}\times\mathbf{c}} = \mbox{vec}({\cal A})$. Likewise,
if $e = 0$, then $\mathbf{r}=\emptyset$ and
${\cal A}_{\mathbf{r}\times\mathbf{c}} = \mbox{vec}({\cal A})^{T}$.

\subsection{Special Cases}

The preceding results take on a special form when $\cal A$ is a rank-1 tensor. 
Suppose
${\cal A} \;=\; a^{(1)} \circ \cdots \circ a^{(d)}$ where \inv{a^{(k)}}{n_{k}} for $k=1,
\ldots,d$,
i.e.,
\vspace*{-.0in}
\[ 
{\cal A}(i_{1},\ldots, i_{d}) \;=\; a^{(1)}(i_{1})\cdots a^{(d)}(i_{d})
\qquad \mathbf{1} \leq \mathbf{i} \leq \mathbf{n}.
\]
It follows from (2.1)-(2.4) that
if 
\[
v \;=\; \mbox{vec}(a^{(1)} \circ \cdots \circ a^{(d)}),
\]

\vspace*{-.1in}

\noindent
then
\begin{equation}
v \;=\; a^{(d)} \T \cdots \T a^{(1)}
\end{equation}

\vspace*{-.15in}

\noindent
and

\vspace*{-.2in}

\begin{equation}
v_{ivec(\mathbf{i},\mathbf{n})} \;=\; a^{(1)}(i_{1}) \cdots a^{(d)}(i_{d})
\qquad \mathbf{1} \leq \mathbf{i} \leq \mathbf{n}. \rule{0pt}{15pt}
\end{equation}
If $\mathbf{p}$ is a permutation of $1:d$, then from the 
definition of the $\mathbf{p}$-transpose in (2.9) and the definition
of ${\cal A}_{\mathbf{r}\times \mathbf{c}}$ in (2.12)-(2.16) we have
\begin{equation}
{\cal A}^{<\mathbf{p}>} \;=\; a^{(p_{1})} \circ \cdots \circ a^{(p_{d})}
\end{equation}

\vspace*{-.05in}

\noindent
and

\vspace*{-.2in}
\begin{equation}
{\cal A}_{\mathbf{r}\times \mathbf{c}} \;=\;
\mbox{vec}(a^{(r_{1})} \circ \cdots \circ a^{(r_{e})}) \:\cdot \: \mbox{vec}(a^{(c_{1})} \circ \cdots 
\circ a^{(c_{d-e})})^{T}.
\end{equation}
In other words, the unfolding of a rank-1 tensor is a rank-1 matrix.
These rank-1 facts simplify some of the proofs that follow in the next section.

We consider another special case that relates to the multilinear product, see \S4.2. Suppose ${\cal B} \;= B^{(1)} \circ \cdots \circ B^{({d}\,)}$ where \inm{B^{(k)}}{q_{k}}{n_{k}} for 
$k=1,\ldots,d$, i.e.,
\[
{\cal B}(i_{1},j_{1},\ldots,i_{{d}},j_{{d}}\,)\;=\;
B^{(1)}(i_{1},j_{1})\cdots B^{({d}\,)}(i_{{d}},j_{{d}}).
\]
Note that $\cal B$ is an order-$2d$ tensor. If
$\mathbf{r} \:=\: 1:2:2d$,
$\mathbf{c} \:=\: 2:2:2d$, and
$\mathbf{p} \:=\: [\:\mathbf{r} \; \mathbf{c}\:]$, then
for
all $\mathbf{i}$ and $\mathbf{j}$ that satisfy $\mathbf{1} \leq \mathbf{i} \leq \mathbf{q}$
and $\mathbf{1} \leq \mathbf{j} \leq \mathbf{n}$ we have
\[   
{\cal B}_{\mathbf{r}\times \mathbf{c}}(\alpha,\beta) \;=\;
B^{(1)}(i_{1},j_{1})\cdots B^{({d})}(i_{{d}},j_{{d}})
\]
where $\alpha = ivec(\mathbf{i},\mathbf{q})$ and $\beta = ivec(\mathbf{j},\mathbf{n})$. 
However, this is precisely the $(\alpha,\beta)$ entry of the matrix
$B^{({d}\,)} \T \cdots \T B^{(1)}$. Thus,
\begin{equation}
\left( B^{(1)} \circ \cdots \circ B^{({d}\,)} \right)_{[\,1:2:2d\,]\times[\,2:2:2d\,]}
\;=\;  B^{({d}\,)} \T \cdots \T B^{(1)}.
\end{equation}

\section{Block Notation and Operations}
In this section we formalize the notion of a block tensor \cite{sym}, develop a block version of $\mbox{vec}(\cdot )$, and
explain how to permute ${\cal A}_{\mathbf{r}\times \mathbf{c}}$ into a block matrix whose
blocks are unfoldings of $\cal A$'s blocks. The presentation is simplified if we make use of 
multi-indexed subscripts. Suppose 
\[
\begin{array}{lcl}
\mathbf{1} \leq \mathbf{i} \leq \mathbf{s} = [s_{1},\ldots,s_{e}] & & S = s_{1}\cdots s_{e}\\
\mathbf{1} \leq \mathbf{j} \leq \mathbf{t} = [t_{1},\ldots,t_{f}] & & T = t_{1}\cdots t_{f}
\rule{0pt}{14pt}
\end{array}
\]
To say that $v_{\mathbf{i}}$ is the $\mathbf{i}$-th component
of vector \inv{v}{\smallS} is to say that $v_{\mathbf{i}} = v_{ivec(\mathbf{i},\mathbf{s})}$.
Similarly, if $D_{1},\ldots,D_{\smallS}$ are square matrices and 
$D = \mbox{diag}(\ldots,D_{\mathbf{i}},\ldots)$,
then $D$ is a block diagonal matrix whose $\mathbf{i}$-th diagonal block is 
$D_{ivec(\mathbf{i},\mathbf{s})}.$  Finally, if $C = (C_{ij})$ is an 
$S$-by-$T$ block matrix, then $C_{\mathbf{i},\mathbf{j}}$ is its $(\mathbf{i},\mathbf{j})$-th block, i.e.,
$C_{\mathbf{i},\mathbf{j}} = C_{ivec(\mathbf{i},\mathbf{s}),ivec(\mathbf{j},\mathbf{t})}$.

\subsection{Tensor Blockings}
We say that 
\begin{equation}
\mathbf{M} \;=\; \{\mathbf{m}^{(1)},\ldots,\mathbf{m}^{(d)}\}
\end{equation}
is a {\em blocking} for \inTens{A}{}{n_{1}}{n_{d}} if 
\begin{equation}
\mathbf{m}^{(k)} \;=\; \left[ \:{m}_{1}^{(k)},\ldots,{m}_{b_{k}}^{(k)} \: \right]
\end{equation}
is a vector of positive integers that sums to $n_{k}$ for $k=1,\ldots, d$. 
If $\mathbf{1} \leq \mathbf{i} \leq \mathbf{b}$,
then block  $\mathbf{i}$ is the ${m}_{i_{1}}^{(1)} \times \cdots \times
{m}_{i_{d}}^{(d)}$ tensor 
defined by
\begin{equation}
{\cal A}_{\mathbf{i}} \;=\; {\cal A}(\:\ell_{i_{1}}^{(1)}:u_{i_{1}}^{(1)},\ldots,
\ell_{i_{d}}^{(d)}:u_{i_{d}}^{(d)}\:)
\end{equation}
where the lower and upper bound vectors $\boldell^{(1)},\ldots,\boldell^{(d)}$ and
$\mathbf{u}^{(1)},\ldots,\mathbf{u}^{(d)}$ are defined by
\begin{eqnarray}
\ell^{(k)}_{j} &=& m^{(k)}_{1} + \cdots + m^{(k)}_{j-1}\;+\; 1\\
u^{(k)}_{j} &=& m^{(k)}_{1} + \cdots + m^{(k)}_{j-1}\;+\;
m^{(k)}_{j}
\end{eqnarray}
for $k=1,\ldots,d$. 
The blocking $\mathbf{M}$ identifies $\cal A$
as a $b_{1} \times b_{2} \times \cdots \times b_{d}$ block tensor. The number of
elements in each tensor block ${\cal A}_{\mathbf{i}}$ turns out to be a quantity of importance and to that
end we define the ``volume function'' 
$\mbox{vol}_{\mathbf{\smallM}}(\cdot)$ by
\begin{equation}
\mbox{vol}_{\mathbf{\smallM}}(\mathbf{i}) \;=\; m^{(1)}_{i_{1}} \cdots m^{(d)}_{i_{d}}
\qquad
\mathbf{1} \leq \mathbf{i} \leq \mathbf{b}.
\end{equation}

\subsection{The $\mbox{\bf Vec}_{\mathbf{M}}(\cdot )$ Operation}
If $\mathbf{M}$ is a blocking of \inTens{A}{}{n_{1}}{n_{d}} given by (3.1)-(3.5),
then $\mbox{vec}_{\scriptscriptstyle\mathbf{M}}({\cal A})$ is the block vector
\begin{equation}
\mbox{vec}_{\scriptscriptstyle\mathbf{M}}({\cal A}) \;=\;
\left[ \begin{array}{c} v_{\mathbf{1}} \\ \vdots \\ v_{\mathbf{b}} \end{array} \right]
\qquad v_{\mathbf{i}} \;=\; \mbox{vec}({\cal A}_{\mathbf{i}})
\end{equation}
where $\mathbf{1} \leq \mathbf{i} \leq \mathbf{b}$. In other words, $\mbox{vec}_{\mathbf{\smallM}}({\cal A})$ stacks the vec's of ${\cal A}$'s blocks where the blocks are taken in the vec-order.

To illustrate this notation in the familiar matrix case, if
\[
\mathbf{M} \;=\; \{ {m}^{(1)}, {m}^{(2)}\} \;=\; \{ \:
[\: {m}^{(1)}_{1} \: {m}^{(1)}_{2} \:]\:,\:
[\: {m}^{(2)}_{1} \: {m}^{(2)}_{2} \: {m}^{(2)}_{3} \:]\;\} 
\]
is a blocking for \inm{\cal A}{n_{1}}{n_{2}}, then we are choosing to regard $\cal A$ as 
a 2-by-3 block matrix
\begin{equation}
{\cal A} \;=\; \left[ \begin{array}{ccc}  {\cal A}_{11} & \: {\cal A}_{12}&\: {\cal A}_{13} \\
{\cal A}_{21} & \:{\cal A}_{22} & \:{\cal A}_{23} \rule{0pt}{15pt}\end{array} \right]
\!\! \!
\mbox{\small $\begin{array}{l} \scriptstyle{\;{m}^{(1)}_{1}} \\ 
\scriptstyle{\;{m}^{(1)}_{2}}\rule{0pt}{15pt}\!
\end{array}$} .
\end{equation}
\vspace*{-.25in}
\[
\mbox{\small$\!\!
\begin{array}{lll}\;\;
\scriptstyle{{m}^{(2)}_{1}} \rule{0pt}{12pt}& \;\:\scriptstyle{{m}^{(2)}_{2}} & \;\scriptstyle{{m}^{(2)}_{3}}
\end{array}$}
\]
In this case, $\mbox{vec}_{\scriptscriptstyle\mathbf{M}}(\cdot ) $ and
$\mbox{vol}_{\mathbf{\smallM}}(\cdot)$ are given by
\[
\mbox{vec}_{\scriptscriptstyle\mathbf{M}}(A) \;=\;
\left[ \begin{array}{c}
v_{[1,1]} \\
v_{[2,1]}\rule{0pt}{12pt}\\
v_{[1,2]} \rule{0pt}{12pt}\\
v_{[2,2]} \rule{0pt}{12pt}\\
v_{[1,3]} \rule{0pt}{12pt}\\
v_{[2,3]} \rule{0pt}{12pt}
\end{array} \right]
=
\left[ \begin{array}{c}
\mbox{vec}({\cal A}_{11}) \\
\mbox{vec}({\cal A}_{21}) \rule{0pt}{12pt}\\
\mbox{vec}({\cal A}_{12}) \rule{0pt}{12pt}\\
\mbox{vec}({\cal A}_{22}) \rule{0pt}{12pt}\\
\mbox{vec}({\cal A}_{13}) \rule{0pt}{12pt}\\
\mbox{vec}({\cal A}_{23}) \rule{0pt}{12pt}
\end{array} \right]
\qquad 
\mbox{vol}_{\mathbf{\smallM}}(\mathbf{i}) \;=\;
\left\{
\begin{array}{ll}
{m^{(1)}_{1}m^{(2)}_{1}} & \mbox{if $\mathbf{i} = [1,1]$}\\
{m^{(1)}_{2}m^{(2)}_{1}}\rule{0pt}{12pt} & \mbox{if $\mathbf{i} = [2,1]$}\\
{m^{(1)}_{1}m^{(2)}_{2}}\rule{0pt}{12pt} & \mbox{if $\mathbf{i} = [1,2]$}\\
{m^{(1)}_{2}m^{(2)}_{2}}\rule{0pt}{12pt} & \mbox{if $\mathbf{i} = [2,2]$} \\
{m^{(1)}_{1}m^{(2)}_{3}} \rule{0pt}{12pt} & \mbox{if $\mathbf{i} = [1,3]$}\\
{m^{(1)}_{2}m^{(2)}_{3}}& \rule{0pt}{12pt} \mbox{if $\mathbf{i} = [2,3]$}
\end{array} \right. .
\]

As we mentioned in the introduction, our goal is to permute the rows and columns of the unfolding
${\cal A}_{\mathbf{r}\times \mathbf{c}}$ so that its blocks are unfoldings of $\cal A$'s blocks. To be more precise, if ${\cal A} = ({\cal A}_{\mathbf{i}})$ is a block tensor
our goal is to determine permutation matrices $P_{\mathbf{\smallR}}$ and 
$P_{\mathbf{\smallC}}$ so that 
\begin{equation}
{\cal A}_{\mathbf{\smallR}\times \mathbf{\smallC}} \;=\; P_{\mathbf{\smallR}} {\cal A}_{\mathbf{r}\times \mathbf{c}} P_{\mathbf{\smallC}}^{T}
\end{equation}
is a block matrix whose blocks are the matrices 
$({\cal A}_{\mathbf{k}})_{\mathbf{r}\times\mathbf{c}}$. 
It turns out that the permutations $ P_{\mathbf{\smallR}}$ and $ P_{\mathbf{\smallC}}$ map ``vec-of-a-tensor'' to ``$\mbox{vec}_{\mathbf{\smallM}}$-of-a-tensor.'' This is not surprising since
the rows and columns of ${\cal A}_{\mathbf{r}\times\mathbf{c}}$  are vec's of reduced order block tensors, see (2.17)-(2.20).

\medskip

\begin{theorem}
 Suppose $\mathbf{M} = \{\mathbf{m}^{(1)},\ldots,\mathbf{m}^{(d)}\}$ is a blocking of 
\inTens{A}{}{n_{1}}{n_{d}} with
\[
\mathbf{m}^{(k)} = [ m^{(1)}_{1} ,\ldots,m^{(k)}_{b_{k}}]\qquad k=1,\ldots,d.
\]
For $k=1,
\ldots,d$  set
\begin{eqnarray*}
N_{k} &=& n_{1} \cdots n_{k},  \\
\mathbf{M}_{k} &=& \{\mathbf{m}^{(1)},\ldots,\mathbf{m}^{(k)}\}, \rule{0pt}{14pt} 
\end{eqnarray*}
and define
\begin{equation}
Q_{k} \;=\; \left\{ 
\begin{array}{ll}
I_{\smallN_{d}} & \mbox{\rm if $k=1$}\\
I_{\smallN_{d}/\smallN_{k}} \T \Gamma^{(k)} & \mbox{\rm if $1<k\leq d$}  \rule{0pt}{14pt}
\end{array} \right.
\end{equation}
where $N_d/N_k = n_{k+1}n_{k+2}\cdots n_d$,
\begin{equation}
\Gamma^{(k)} \;=\; \mbox{\rm diag}(\Gamma^{(k)}_{1},\ldots,\Gamma^{(k)}_{b_{k}})
\end{equation}
and
\begin{equation}
\Gamma^{(k)}_{{j}} \;=\;
\mbox{\rm diag}(\ldots,
\Pi_{\mbox{\rm \footnotesize vol}_{\mathbf{\smallM}_{k-1}}(\mathbf{i}),m^{(k)}_{j}},\ldots )\cdot
\Pi_{m^{(k)}_{j},\smallN_{k-1}}
\qquad  \mathbf{1} \leq \mathbf{i} \leq \mathbf{b}(1:k-1).
\end{equation}
The permutation matrix $P_{\mathbf{\smallM}}$ defined by
\[
P_{\mathbf{\smallM}} \;=\; Q_{d} \cdots Q_{2}Q_{1}
\]
has the property that
\[
\mbox{\rm vec}_{\mathbf{\smallM}}({\cal A}) \;=\; P_{\mathbf{\smallM}} \mbox{\rm vec}({\cal A}).
\]
\end{theorem}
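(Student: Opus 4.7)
The plan is to reduce to the rank-one case via linearity and then run an induction on $k$ that tracks how the cumulative permutation $Q_k\cdots Q_1$ transforms the Kronecker product expansion of $\mbox{vec}({\cal A})$. Since $\mbox{vec}(\cdot)$, $\mbox{vec}_{\mathbf{\smallM}}(\cdot)$, and multiplication by a fixed permutation are all linear, and any tensor is a sum of rank-one tensors of the form $e_{i_1}\circ\cdots\circ e_{i_d}$, it suffices to prove the identity for ${\cal A}=a^{(1)}\circ\cdots\circ a^{(d)}$ with $a^{(k)}\in\R^{n_k}$. Partitioning each $a^{(k)}=[a^{(k)}_1;\ldots;a^{(k)}_{b_k}]$ conformally with $\mathbf{m}^{(k)}$, equation (2.19) gives $\mbox{vec}({\cal A})=a^{(d)}\T\cdots\T a^{(1)}$ and $\mbox{vec}({\cal A}_{\mathbf{i}})=a^{(d)}_{i_d}\T\cdots\T a^{(1)}_{i_1}$.

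The key inductive invariant I would carry for $0\le k\le d$ is
\[
 v^{(k)}\;:=\;Q_k\cdots Q_1\mbox{vec}({\cal A})\;=\;\bigl(a^{(d)}\T\cdots\T a^{(k+1)}\bigr)\T u_k,
\]
where $u_k$ is the length-$N_k$ vector obtained by stacking the blocks $a^{(k)}_{i_k}\T\cdots\T a^{(1)}_{i_1}$ in vec-order of $(i_1,\ldots,i_k)$, i.e.\ $u_k=\mbox{vec}_{\mathbf{\smallM}_k}(a^{(1)}\circ\cdots\circ a^{(k)})$. The base case $k=0$ is exactly (2.19), and the case $k=d$ reads $v^{(d)}=u_d=\mbox{vec}_{\mathbf{\smallM}}({\cal A})$, which is the conclusion of the theorem.

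The inductive step reduces, via $Q_k=I_{\smallN_d/\smallN_k}\T\Gamma^{(k)}$ and the hypothesis, to the single ``key identity''
\[
 \Gamma^{(k)}\bigl(a^{(k)}\T u_{k-1}\bigr)\;=\;u_k.
\]
To verify it I would compute $\Gamma^{(k)}_j(a^{(k)}_j\T u_{k-1})$ for each $j$: by (2.7) applied with $q=m^{(k)}_j$ and $r=N_{k-1}$, the factor $\Pi_{m^{(k)}_j,\smallN_{k-1}}$ turns $a^{(k)}_j\T u_{k-1}$ into $u_{k-1}\T a^{(k)}_j$, which inherits the vec-ordered block structure of $u_{k-1}$ with $\mathbf{i}$-th block $(a^{(k-1)}_{i_{k-1}}\T\cdots\T a^{(1)}_{i_1})\T a^{(k)}_j$ of length $\mbox{vol}_{\mathbf{\smallM}_{k-1}}(\mathbf{i})\cdot m^{(k)}_j$. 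The outer block-diagonal factor in $\Gamma^{(k)}_j$ is precisely $\mbox{diag}(\ldots,\Pi_{\mbox{vol}_{\mathbf{\smallM}_{k-1}}(\mathbf{i}),m^{(k)}_j},\ldots)$, and one more application of (2.7) (or its block form (2.8)) swaps each block into $a^{(k)}_j\T a^{(k-1)}_{i_{k-1}}\T\cdots\T a^{(1)}_{i_1}$. Concatenating over $j=1,\ldots,b_k$ then yields the vec-ordered stack of $a^{(k)}_{i_k}\T a^{(k-1)}_{i_{k-1}}\T\cdots\T a^{(1)}_{i_1}$, which is precisely $u_k$.

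The main obstacle is purely bookkeeping: one must match the variable block sizes $\mbox{vol}_{\mathbf{\smallM}_{k-1}}(\mathbf{i})$ appearing inside $u_{k-1}$ with the inner structure of $\Gamma^{(k)}_j$ and see that the two stages of shuffles in the definition of $\Gamma^{(k)}_j$ are exactly what is needed to move $a^{(k)}_j$ from the ``outside'' of each block-of-$u_{k-1}$ Kronecker factor to the ``inside''. Once the rank-one case is secured, linearity finishes the proof for arbitrary $\cal A$.
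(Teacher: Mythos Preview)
Your proof is correct and follows essentially the same route as the paper's: both reduce to rank-one tensors by linearity and then verify, via the block-shuffle identity (2.8), that $\Gamma^{(k)}(a^{(k)}\otimes u_{k-1})$ stacks the vectors $a^{(k)}_{j}\otimes a^{(k-1)}_{i_{k-1}}\otimes\cdots\otimes a^{(1)}_{i_1}$ in the right order. The only organizational difference is that the paper inducts on the tensor order $d$ (establishing $P_{\mathbf{\smallM}}=\Gamma^{(d)}(I_{n_d}\otimes P_{\widehat{\mathbf{\smallM}}})$ and invoking the hypothesis for the order-$(d{-}1)$ subtensor), whereas you fix $d$ and induct on the stage $k$ in the partial product $Q_k\cdots Q_1$; the core computation is identical.
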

\begin{proof}
Since both  $\mbox{vec}(\cdot)$ and $\mbox{vec}_{\mathbf{\smallM}}(\cdot )$ are linear operators and 
any tensor is the sum of rank-1 tensors, it suffices to prove the theorem for the case
\[
{\cal A} \;=\; a^{(1)} \circ \cdots \circ a^{(d)}
\]
where each \inv{a^{(k)}}{n_{k}} is blocked as follows:
\[
a^{(k)} \;=\;
\left[ 
\begin{array}{c}
a^{(k)}_{1} \\ \vdots \rule[-7pt]{0pt}{17pt} \\ a^{(k)}_{b_{k}} \end{array} \right]\!\!
\begin{array}{l} \left. \right\}m^{(k)}_{1} \\ \rule[-7pt]{0pt}{17pt} \\ \left. \right\}m^{(k)}_{b_{k}}
\end{array}.
\]
We proceed by induction noting that
the theorem is true if $d = 1$ because in that case,
$\mbox{vec}_{\mathbf{\smallM}}({\cal A}) = \mbox{vec}({\cal A})$.
Assume that the theorem holds for block tensors with order $d-1$ or less with $d>1$.
Define
\begin{eqnarray*}
\widehat{\cal A} &=& a^{(1)} \circ \cdots \circ a^{(d-1)}\\
\mathbf{\widehat{M}} &=& \mathbf{M}_{d-1} \rule{0pt}{13pt} \\
\mathbf{\widehat{b}} &=& \mathbf{b}(1:d-1) \rule{0pt}{13pt}.
\end{eqnarray*}
and observe that $\mathbf{\widehat{M}}$
is a blocking for  $\widehat{\cal A}$, an order-$(d-1)$ tensor.
It follows  
by induction that
\begin{equation}
\mbox{vec}_{\mathbf{\widehat{\smallM}}}(\widehat{\cal A}\:) \;=\; 
P_{\mathbf{\widehat{\smallM}}}\,\mbox{vec}(\widehat{\cal A}\:).
\end{equation}
From the definition of $\mbox{vec}_{\mathbf{\smallM}}(\cdot )$ in (3.7), we have
\begin{equation}
\mbox{vec}_{\mathbf{\widehat{\smallM}}}(\widehat{\cal A}\:) \;=\;
\left[ \begin{array}{c} v_{\mathbf{1}} \\ \vdots \\ v_{\widehat{\mathbf{b}}} \end{array} \right]
\qquad
v_{{\mathbf{i}}} = a^{(d-1)}_{i_{d-1}} \T \cdots \T a^{(1)}_{i_{1}}
\end{equation}
for all $\mathbf{i}$ that satisfy $\mathbf{1} \leq \mathbf{i} \leq \mathbf{\widehat{b}}$.
Equation (2.21) says that
\[
\mbox{vec}({\cal A}) \;=\; a^{(d)} \T (a^{(d-1)} \T \cdots \T a^{(1)}) \;=\;
a^{(d)} \T \mbox{vec}(\widehat{\cal A}\:),
\]
and so
\begin{equation}
(I_{n_{d}} \T P_{\mathbf{\widehat{\smallM}}})\mbox{vec}({\cal A})
\;=\; a^{(d)} \T v \;=\;
\left[ \begin{array}{c} a^{(d)}_{1} \\ \vdots \\ a^{(d)}_{b_{d}} \end{array} \right]
\T v \;=\;
\left[ \begin{array}{c} a^{(d)}_{1}\T v \\ \vdots \\ a^{(d)}_{b_{d}}\T v \end{array} \right].
\end{equation}
Using (2.8) we have for $j=1,\ldots,b_{d}$ that
\[
\Gamma^{(d)}_{j}\left( a^{(d)}_{j} \T v \right) \;=\; 
\left[ \begin{array}{c}
a^{(d)}_{j} \T v_{\mathbf{1}}   \\
\vdots \\
a^{(d)}_{j} \T v_{\widehat{\mathbf{b}}} 
\end{array}
\right]
\]
where
\[ 
\Gamma^{(d)}_{j} \;=\; \mbox{diag}\left(\Pi_{\mbox{\scriptsize vol}_{\widehat{\mathbf{M}}}(\mathbf{1}),m^{(d)}_{j}},\ldots,
\Pi_{\mbox{\scriptsize vol}_{\widehat{\mathbf{M}}}(\mathbf{b}(1:d-1)),m^{(d)}_{j}}\right) \cdot \Pi_{m^{(d)}_{j},N/n_{d}}.
\]
Thus, if $\Gamma^{(d)} = \mbox{diag}(\Gamma^{(d)}_{1},\ldots,\Gamma^{(d)}_{b_{d}} )$,
then
\begin{equation}
\Gamma^{(d)} 
\left[ \begin{array}{c} a^{(d)}_{1}\T v \rule[-10pt]{0pt}{15pt} \\ \hline  \vdots \\ \hline 
\rule[-0pt]{0pt}{15pt} a^{(d)}_{b_{d}}\T v \end{array} \right]
\;=\;
\left[ \begin{array}{c} 
a^{(d)}_{1} \T v_{\mathbf{1}}   \\
\vdots \\
a^{(d)}_{1} \T v_{\widehat{\mathbf{b}}}  \rule[-10pt]{0pt}{15pt}
 \\ \hline \vdots \\ \hline 
a^{(d)}_{b_{d}} \T v_{\mathbf{1}}  \rule[-0pt]{0pt}{15pt} \\
\vdots \\
a^{(d)}_{b_{d}} \T v_{\widehat{\mathbf{b}}}  \end{array} \right]
\;=\;\mbox{vec}_{\mathbf{M}}({\cal A}).
\end{equation}
Combining this equation with (3.15)  we have
\[
\Gamma^{(d)}(I_{n_{d}} \T P_{\widehat{\mathbf{\smallM}}}) \mbox{vec}({\cal A}) \;=\; \mbox{vec}_{\mathbf{\smallM}}({\cal A})
\]
and so $P_{\smallM} = \Gamma^{(d)}(I_{n_{d}} \T P_{\widehat{\mathbf{\smallM}}})$.
But by induction
\[
P_{\widehat{\mathbf{\smallM}}} \;=\; \widehat{Q}_{d-1} \cdots \widehat{Q}_{2}\: \widehat{Q}_{1}
\]
where 
\[
\widehat{Q}_{k} \;=\; 
\left\{ \begin{array}{ll}
I_{\smallN_{d-1}} & \mbox{if $k=1$} \\
I_{\smallN_{d-1}/\smallN_{k}} \T \Gamma^{(k)} & \mbox{if $1<k\leq d-1$}
\rule{0pt}{15pt}
\end{array}\right. .
\]
It follows that
\begin{eqnarray*}
P_{\mathbf{\smallM}} &=& \Gamma^{(d)}(I_{n_{d}} \T P_{\widehat{\mathbf{\smallM}}}) \\
 &=& \Gamma^{(d)}(I_{n_{d}} \T \widehat{Q}_{d-1}) \cdots (I_{n_{d}} \T \widehat{Q}_{2}) (I_{n_{d}} \T \widehat{Q}_{1}) \rule{0pt}{13pt} \\
&=& (I_{\smallN_{d}/\smallN_{d}} \T \Gamma^{(d)})(I_{\smallN_{d}/\smallN_{d-1}} \T \Gamma^{(d-1)})
\cdots  (I_{\smallN_{d}/\smallN_{2}} \T \Gamma^{(2)})(I_{\smallN_{d}}) \rule{0pt}{13pt}\\
&=& Q_{d}\: Q_{d-1} \cdots Q_{2} \: Q_{1} \rule{0pt}{13pt}
\end{eqnarray*}
completing the proof.
\end{proof}

\medskip

\noindent
The permutation $P_{\mathbf{\smallM}}$ has a particularly simple form if the blocking is
uniform in each dimension.

\medskip

\begin{corollary}
Suppose $\mathbf{M}$ is defined by {\rm (3.1)-(3.5)}. If 
\begin{eqnarray*}
m^{(k)}_{1} &=& \cdots = m^{(k)}_{b_{k}} \;=\; \mu_{k} \\
N_{k} &=& n_{1} \cdots n_{k} \rule{0pt}{12pt} \\
B_{k} &=& b_{1} \cdots b_{k} \rule{0pt}{12pt}\\
D_{k} &=& \mu_{1} \cdots \mu_{k} \rule{0pt}{12pt}
\end{eqnarray*}
for $k=1,\ldots,d$, then $P_{\smallM} = Q_{d} \cdots Q_{2}Q_{1}$ where
\[
Q_{k} \;=\; \left\{
\begin{array}{ll}
I_{\smallN_{d}} & \mbox{\rm if $k=1$}\\
I_{b_{k}\smallN_{d}/\smallN_{k}} \T \Pi_{\mu_{k},\smallB_{k-1}} \T I_{\smallD_{k-1}}
&\mbox{\rm if $1<k \leq d$} \rule{0pt}{14pt}
\end{array}\right.
\]
\end{corollary}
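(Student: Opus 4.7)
The plan is to derive the corollary as a direct specialization of Theorem 3.1 under the uniform blocking assumption, showing that the general $\Gamma^{(k)}_j$ collapses into a clean tensor product of identities and a single perfect shuffle.

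First I would instantiate the formulas from Theorem 3.1 and exploit uniformity in two places. Since $m^{(k)}_j = \mu_k$ for every $j$, the volume function is constant: $\mbox{vol}_{\mathbf{M}_{k-1}}(\mathbf{i}) = \mu_1 \cdots \mu_{k-1} = D_{k-1}$ for every $\mathbf{i}$ satisfying $\mathbf{1} \leq \mathbf{i} \leq \mathbf{b}(1:k-1)$. There are $B_{k-1}$ such multi-indices, so the block-diagonal factor in (3.12) becomes $I_{B_{k-1}} \otimes \Pi_{D_{k-1},\mu_k}$, and (3.12) simplifies to
\[
\Gamma^{(k)}_j \;=\; (I_{B_{k-1}} \otimes \Pi_{D_{k-1},\mu_k}) \cdot \Pi_{\mu_k,\,N_{k-1}}.
\]
Here I use that $N_{k-1} = n_1 \cdots n_{k-1} = (b_1\mu_1)\cdots(b_{k-1}\mu_{k-1}) = B_{k-1}D_{k-1}$.

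The key step is now a perfect-shuffle identity: I claim
\[
(I_{B_{k-1}} \otimes \Pi_{D_{k-1},\mu_k}) \cdot \Pi_{\mu_k,\,B_{k-1}D_{k-1}} \;=\; \Pi_{\mu_k,B_{k-1}} \otimes I_{D_{k-1}}.
\]
I would verify this on a generic rank-1 input $c \otimes b \otimes a$ with $a \in \RR^{D_{k-1}}$, $b \in \RR^{B_{k-1}}$, $c \in \RR^{\mu_k}$, which suffices by linearity. Using the basic shuffle property (2.7), the left side first maps $c \otimes (b \otimes a) \mapsto b \otimes a \otimes c$ via $\Pi_{\mu_k,N_{k-1}}$, and then the inner shuffle sends this to $b \otimes c \otimes a$. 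The right side directly yields $(\Pi_{\mu_k,B_{k-1}}(c\otimes b)) \otimes a = b \otimes c \otimes a$, so the two sides agree. This is the main (and only non-routine) obstacle.

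With this identity, $\Gamma^{(k)}_j = \Pi_{\mu_k,B_{k-1}} \otimes I_{D_{k-1}}$, independent of $j$. Hence the block-diagonal assembly (3.11) gives
\[
\Gamma^{(k)} \;=\; I_{b_k} \otimes \Pi_{\mu_k,B_{k-1}} \otimes I_{D_{k-1}},
\]
and substituting into (3.10) yields
\[
Q_k \;=\; I_{N_d/N_k} \otimes I_{b_k} \otimes \Pi_{\mu_k,B_{k-1}} \otimes I_{D_{k-1}} \;=\; I_{b_k N_d/N_k} \otimes \Pi_{\mu_k,B_{k-1}} \otimes I_{D_{k-1}}
\]
for $k > 1$, while the $k=1$ case is inherited unchanged from the theorem. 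This is exactly the stated formula, so $P_{\mathbf{M}} = Q_d \cdots Q_2 Q_1$ in the advertised form, completing the proof.
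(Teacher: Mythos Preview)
Your proof is correct and follows essentially the same route as the paper's: both specialize (3.12) using the constant volume $D_{k-1}$, then reduce $(I_{B_{k-1}}\otimes\Pi_{D_{k-1},\mu_k})\Pi_{\mu_k,N_{k-1}}$ to $\Pi_{\mu_k,B_{k-1}}\otimes I_{D_{k-1}}$ and substitute into (3.11) and (3.10). The only cosmetic difference is that the paper quotes the identity $(I_s\otimes\Pi_{r,q})\Pi_{q,rs}=\Pi_{q,s}\otimes I_r$ as a known fact (citing \cite{cvFFT}), whereas you supply a short rank-1 verification of the same identity; the argument is otherwise identical.
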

\begin{proof}
Observe that $\mbox{vol}_{\mathbf{\smallM}_{k-1}}(\mathbf{i}) \;=\; \mu_{1}\cdots \mu_{k-1}$. It follows from the definition of $\Gamma^{(k)}_{j}$
in (3.12)
that
\[
\Gamma^{(k)}_{j} \;=\;  \left(I_{\smallB_{k-1}} \T \Pi_{\smallD_{k-1},\mu_{k}} \right)\Pi_{\mu_{k},\smallN_{k-1}} .
\]
Using the well-known Kronecker product identity 
\[
\left( I_{s} \T \Pi_{r,q} \right)\Pi_{q,rs} \;=\; \Pi_{q,s} \T I_{r},
\]
 it follows that
\[
\Gamma^{(k)}_{j}
\;=\;\Pi_{\mu_{k},\smallB_{k-1}} \T I_{\smallD_{k-1}}.
\]
See \cite{cvFFT}. From (3.10) we have
\[
\Gamma^{(k)} \;=\; I_{b_{k}} \T \Pi_{\mu_{k},\smallB_{k-1}} \T I_{\smallD_{k-1}}
\]
and so
\[
 Q_{k} \;=\; I_{\smallN_{d}/\smallN_{k}} \T \Gamma^{(k)} \;=\;I_{\smallN_{d}b_{k}/\smallN_{k}} \T \Pi_{\mu_{k},\smallB_{k-1}} \T I_{\smallD_{k-1}}.
\]
This completes the proof.
\end{proof}

\medskip

It is interesting to note that the transition from $\mbox{vec}({\cal A})$
to $\mbox{vec}_{\mathbf{\smallM}}({\cal A})$ via the sequence 
\[
Q_{2}\!\cdot\! \mbox{vec}({\cal A})\; \rightarrow \;Q_{3}\!\cdot \!(Q_{2}\!\cdot\!\mbox{vec}({\cal A}))
\; \rightarrow \cdots \rightarrow \; Q_{d}\!\cdot\!(Q_{d-1}\cdots Q_{2}\cdot \mbox{vec}({\cal A}))
\]
is actually a sequence of transpositions. To illustrate, assume
\inv{\cal A}{n_{1}\times n_{2} \times n_{3} \times n_{4}} 
and define the order-8 tensor ${\cal A}^{(1)} $ by
\[
{\cal A}(i_{1},i_{2},i_{3},i_{4}) \;=\;
{\cal A}^{(1)}(\delta_{1},\beta_{1},\delta_{2},\beta_{2},\delta_{3},\beta_{3},\delta_{4},\beta_{4})
\] 
where $\mathbf{1} \leq \mathbf{i} \leq \mathbf{n}$ and the $\delta_{k}$ and $\beta_{k}$ are uniquely defined by
\[
i_{k} = \delta_{k} + (\beta_{k}-1)b_{k}\qquad 1 \leq \delta_{k} \leq  \mu_{k}.
\] 
This says that \inv{{\cal A}^{(1)}}{\mu_{1}\times b_{1} \times \mu_{2}\times b_{2} \times\mu_{3}\times b_{3} \times\mu_{4}\times b_{4}}.
In the $d=4$ case, the $Q$-matrices in Corollary 3.2 are given by
\begin{eqnarray*}
Q_{2} &=& I_{b_{2}n_{3}n_{4}} \T \Pi_{\mu_{2},b_{1}} \T I _{\mu_{1}}\\
Q_{3} &=& I_{b_{3}n_{4}}      \T \Pi_{\mu_{3},b_{1}b_{2}} \T I _{\mu_{1}\mu_{2}} \rule{0pt}{14pt}\\
Q_{4} &=& I_{b_{4}}           \T \Pi_{\mu_{4},b_{1}b_{2}b_{3}} \T I _{\mu_{1}\mu_{2}\mu_{3}}
\rule{0pt}{14pt}.
\end{eqnarray*}
Note from Lemma 2.1  that these permutations correspond to transpositions.
Indeed, if we
define the tensors ${\cal A}^{(2)}$,  ${\cal A}^{(3)}$  ${\cal A}^{(4)}$ by
\[
\left. 
\begin{array}{r}
{\cal A}^{(2)}(\delta_{1},\delta_{2},\beta_{1},\beta_{2},\delta_{3},\beta_{3},\delta_{4},\beta_{4})\\
{\cal A}^{(3)}(\delta_{1},\delta_{2},\delta_{3},\beta_{1},\beta_{2},\beta_{3},\delta_{4},\beta_{4})\rule{0pt}{16pt}\\
{\cal A}^{(4)}(\delta_{1},\delta_{2},\delta_{3},\delta_{4},\beta_{1},\beta_{2},\beta_{3},\beta_{4}) \rule{0pt}{16pt}
\end{array}
\right\} \quad = \;\;
{\cal A}^{(1)}(\delta_{1},\beta_{1},\delta_{2},\beta_{2},\delta_{3},\beta_{3},\delta_{4},\beta_{4})\\
\]
then it can be shown  via Lemma 2.1 that
\begin{eqnarray*}
\mbox{vec}({\cal A}^{(1)}) & = & Q_{1} \mbox{vec}({\cal A})\;=\; \mbox{vec}({\cal A})\\
\mbox{vec}({\cal A}^{(2)}) & = & Q_{2} \mbox{vec}({\cal A}^{(1)}) \rule{0pt}{12pt}\\
\mbox{vec}({\cal A}^{(3)}) & = & Q_{3} \mbox{vec}({\cal A}^{(2)}) \rule{0pt}{12pt}\\
\mbox{vec}_{\mathbf{\smallM}}({\cal A})\;=\;\mbox{vec}({\cal A}^{(4)}) & = & Q_{4} \mbox{vec}({\cal A}^{(3)}) \rule{0pt}{12pt}.
\end{eqnarray*}
Thus, the order-8 tensor ${\cal A}^{(4)}$ has the property that $\mbox{vec}({\cal A}^{(4)})
= \mbox{vec}_{\mathbf{\smallM}}({\cal A})$. Moreover,  ${\cal A}(\mathbf{i}) = 
{\cal A}_{{\boldbeta}}({\bolddelta})$ showing that entry $\mathbf{i}$ is
entry ${\bolddelta}$ of block ${\boldbeta}$.

\subsection{Block Unfoldings}
We now specify the permutation matrices $P_{\mathbf{\smallR}}$ and $P_{\mathbf{\smallC}}$ in (3.9)
that turn ${\cal A}_{\mathbf{r} \times \mathbf{c}}$ into a block matrix with block entries
that are $\mathbf{r}\times \mathbf{c}$ unfoldings of $\cal A$'s
blocks.

\medskip

\begin{theorem}
Suppose $\mathbf{M} = \{\mathbf{m}^{(1)},\ldots,\mathbf{m}^{(d)}\}$ is a blocking 
of \inTens{A}{}{n_{1}}{n_{d}} with 
\[
\mathbf{m}^{(k)} \:=\: [ m_{1}^{(k)},\ldots,m_{b_{k}}^{(k)}] \qquad k=1,\ldots,d.
\]
Let $e$ be an integer that satisfies
$1 \leq e < d$ and assume that
$\mathbf{p}$ is a permutation of $1:d$. Define
\[
\begin{array}{lclclclclcl}
\mathbf{r} \!&\!=\!&\! \mathbf{p}(1:e) & &
\mathbf{R}  \!&\!=\!&\! \{ \mathbf{m}^{(r_{1})},\ldots,\mathbf{m}^{(r_{e})}\} & &
B_{rows}  \!&\!=\!&\! b_{r_{1}}\cdots b_{r_{e}}\\
\mathbf{c}  \!&\!=\!&\!  \mathbf{p}(e+1:d) & &
\mathbf{C}  \!&\!=\!&\! \{ \mathbf{m}^{(c_{1})},\ldots,\mathbf{m}^{(c_{d-e})}\} & &
B_{cols}  \!&\!=\!&\!  b_{c_{1}}\cdots b_{c_{d-e}}\rule{0pt}{12pt}.
\end{array}
\]
The matrix
\[
{\cal A}_{\mathbf{\smallR}\times \mathbf{\smallC}}\;=\;
P_{\mathbf{\smallR}} {\cal A}_{\mathbf{r} \times \mathbf{c}} P_{\mathbf{\smallC}}^{T}
\]
is a $B_{rows}$-by-$B_{cols}$ block matrix whose block entries are specified by
\begin{equation}
\left( {\cal A}_{\mathbf{\smallR}\times \mathbf{\smallC}} \right)_{\mathbf{k}(\mathbf{r}),\mathbf{k}(\mathbf{c})}
\;=\;
\left( {\cal A}_{\mathbf{k}} \right)_{\mathbf{r} \times \mathbf{c}}
\qquad \mathbf{1}\leq\mathbf{k}\leq\mathbf{b}.
\end{equation}
That is to say, if $\mu = ivec(\mathbf{k}(\mathbf{r}),\mathbf{b}(\mathbf{r}))$ and 
$\tau = ivec(\mathbf{k}(\mathbf{c}),\mathbf{b}(\mathbf{c}))$, then the $(\mu,\tau)$ block of 
${\cal A}_{\mathbf{\smallR}\times \mathbf{\smallC}} $ is the $\mathbf{r}\times \mathbf{c}$
unfolding of the $\mathbf{k}$-th block of $\cal A$.
\end{theorem}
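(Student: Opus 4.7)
My plan is to exploit multilinearity in exactly the same way that the proof of Theorem 3.1 does: reduce to the rank-1 case, where the $\mathbf{r}\times\mathbf{c}$ unfolding factors as an outer product $uv^{T}$ by (2.23), and then apply Theorem 3.1 to each of the two factors separately.

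First, I would observe that both sides of (3.18) are linear in $\cal A$. The left-hand side depends linearly on $\cal A$ because
${\cal A}_{\mathbf{r}\times\mathbf{c}}$ is, by (2.13)--(2.16), a linear function of $\cal A$, and $P_{\mathbf{\smallR}}$, $P_{\mathbf{\smallC}}$ do not depend on $\cal A$. The right-hand side depends linearly on $\cal A$ because, for fixed $\mathbf{k}$, the operation ${\cal A}\mapsto {\cal A}_{\mathbf{k}}$ is just a subtensor extraction, and unfolding is again linear. Hence it suffices to prove the identity when $\cal A$ is a rank-1 tensor
${\cal A} = a^{(1)}\circ\cdots\circ a^{(d)}$, with each $a^{(k)}$ partitioned conformally with $\mathbf{m}^{(k)}$ into blocks $a^{(k)}_{1},\ldots,a^{(k)}_{b_{k}}$.

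In that case, the blocks of $\cal A$ are themselves rank-1: ${\cal A}_{\mathbf{k}} = a^{(1)}_{k_{1}} \circ \cdots \circ a^{(d)}_{k_{d}}$. By (2.23) applied to $\cal A$ and to each ${\cal A}_{\mathbf{k}}$,
\[
{\cal A}_{\mathbf{r}\times\mathbf{c}} \;=\; u\, v^{T}, \qquad
({\cal A}_{\mathbf{k}})_{\mathbf{r}\times\mathbf{c}} \;=\; u_{\mathbf{k}(\mathbf{r})}\, v_{\mathbf{k}(\mathbf{c})}^{T},
\]
where $u = \mbox{vec}(a^{(r_{1})}\circ\cdots\circ a^{(r_{e})})$, $v = \mbox{vec}(a^{(c_{1})}\circ\cdots\circ a^{(c_{d-e})})$, and similarly
$u_{\mathbf{k}(\mathbf{r})} = \mbox{vec}(a^{(r_{1})}_{k_{r_{1}}}\circ\cdots\circ a^{(r_{e})}_{k_{r_{e}}})$ and
$v_{\mathbf{k}(\mathbf{c})} = \mbox{vec}(a^{(c_{1})}_{k_{c_{1}}}\circ\cdots\circ a^{(c_{d-e})}_{k_{c_{d-e}}})$. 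Next I would invoke Theorem 3.1 applied to the order-$e$ tensor $a^{(r_{1})}\circ\cdots\circ a^{(r_{e})}$ with blocking $\mathbf{R}$, and to the order-$(d-e)$ tensor $a^{(c_{1})}\circ\cdots\circ a^{(c_{d-e})}$ with blocking $\mathbf{C}$, giving
\[
P_{\mathbf{\smallR}}\, u \;=\; \mbox{vec}_{\mathbf{\smallR}}(a^{(r_{1})}\circ\cdots\circ a^{(r_{e})}),
\qquad
P_{\mathbf{\smallC}}\, v \;=\; \mbox{vec}_{\mathbf{\smallC}}(a^{(c_{1})}\circ\cdots\circ a^{(c_{d-e})}).
\]
By the definition (3.7) of $\mbox{vec}_{\mathbf{\smallM}}(\cdot)$, $P_{\mathbf{\smallR}}u$ is exactly the block vector whose $\mathbf{k}(\mathbf{r})$-th block (in $ivec$-order over $\mathbf{b}(\mathbf{r})$) is $u_{\mathbf{k}(\mathbf{r})}$, and similarly for $P_{\mathbf{\smallC}}v$.

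Finally I would combine these. We have
\[
P_{\mathbf{\smallR}}\, {\cal A}_{\mathbf{r}\times\mathbf{c}}\, P_{\mathbf{\smallC}}^{T}
\;=\; (P_{\mathbf{\smallR}} u)(P_{\mathbf{\smallC}} v)^{T},
\]
and the outer product of two block vectors is a block matrix whose $(\mu,\tau)$ block is $u_{\mathbf{k}(\mathbf{r})}\, v_{\mathbf{k}(\mathbf{c})}^{T}$ with $\mu = ivec(\mathbf{k}(\mathbf{r}),\mathbf{b}(\mathbf{r}))$ and $\tau = ivec(\mathbf{k}(\mathbf{c}),\mathbf{b}(\mathbf{c}))$. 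By the identification above, this block equals $({\cal A}_{\mathbf{k}})_{\mathbf{r}\times\mathbf{c}}$, which is (3.18).

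The main obstacle is bookkeeping rather than deep mathematics. In particular, one must verify that the block indexing induced by $P_{\mathbf{\smallR}}$ on the reduced-order tensor $a^{(r_{1})}\circ\cdots\circ a^{(r_{e})}$ (with blocking $\mathbf{R}$ and block-count vector $\mathbf{b}(\mathbf{r})$) lines up with the $\mathbf{k}(\mathbf{r})$ indexing in (3.18), and similarly for $\mathbf{C}$. Once that matching is made explicit, the rank-1 case is immediate and linearity closes the proof.
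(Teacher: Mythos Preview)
Your proposal is correct and follows essentially the same route as the paper's own proof: reduce to rank-1 by linearity, factor the unfolding as an outer product via (2.24), apply Theorem~3.1 separately to the row and column factors to obtain block vectors, and then read off the $(\mathbf{i},\mathbf{j})$ block as $y_{\mathbf{i}}z_{\mathbf{j}}^{T} = ({\cal A}_{\mathbf{k}})_{\mathbf{r}\times\mathbf{c}}$. The only slip is that the outer-product identity you invoke is (2.24), not (2.23).
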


\medskip

\begin{proof}
By linearity there is no loss of generality in assuming that
\[
{\cal A} \;=\; a^{(1)} \circ \cdots \circ a^{(d)}
\]
where each \inv{a^{(k)}}{n_{k}} is blocked as follows:
\[
a^{(k)} \;=\;
\left[ 
\begin{array}{c}
a^{(k)}_{1} \\ \vdots \rule[-7pt]{0pt}{17pt} \\ a^{(k)}_{b_{k}} \end{array} \right]\!\!
\begin{array}{l} \left. \right\}m^{(k)}_{1} \\ \rule[-7pt]{0pt}{17pt} \\ \left. \right\}m^{(k)}_{b_{k}}
\end{array}.
\]
From (2.24) we know that
\[
{\cal A}_{\mathbf{r}\times \mathbf{c}} \;=\; 
\mbox{vec}(\:a^{(r_{1})} \circ \cdots \circ a^{(r_{e})}\:) \cdot
\mbox{vec}(\:a^{(c_{1})} \circ \cdots \circ a^{(c_{d-e})}\:)^{T}.
\]
Since $\mathbf{R}$ is a blocking for $a^{(r_{1})} \circ \cdots \circ a^{(r_{e})}$
and
$\mathbf{C}$ is a blocking for $a^{(c_{1})} \circ \cdots \circ a^{(c_{d-e})}$,
it follows from Theorem 3.1 that
\[
P_{\mathbf{\smallR}}A_{\mathbf{r}\times \mathbf{c}} P_{\mathbf{\smallC}}^{T} \;=\; yz^{T}
\]
where 
$y \;=\; \mbox{vec}_{\mathbf{\smallR}}(a^{(r_{1})} \circ \cdots \circ a^{(r_{e})})$
and
$z \;=\; \mbox{vec}_{\mathbf{\smallC}}(a^{(c_{1})} \circ \cdots \circ a^{(c_{d-e})})$.
These block vectors are specified by

\begin{eqnarray}
y &=& 
\left[ \begin{array}{c} y_{\mathbf{1}} \\ \vdots \\ y_{\mathbf{b}(\mathbf{r})} \end{array}\right]
\qquad y_{\mathbf{i}} \;=\;
\mbox{vec}(a^{(r_{1})}_{i_{1}} \circ \cdots \circ a^{(r_{e})}_{i_{e}})
\qquad \quad \: \mathbf{1} \leq \mathbf{i} \leq  \mathbf{b}(\mathbf{r}) \\
z &=& \left[ \begin{array}{c} z_{\mathbf{1}} \\ \vdots \\ z_{\mathbf{b}(\mathbf{c})} \end{array} \right]
\qquad z_{\mathbf{j}} \;=\;
\mbox{vec}(a^{(c_{1})}_{j_{1}} \circ \cdots \circ a^{(c_{d-e})}_{j_{d-e}})
\qquad \; \mathbf{1} \leq \mathbf{j} \leq \mathbf{b}(\mathbf{c})\rule{0pt}{40pt}
\end{eqnarray}
and so the $(\mathbf{i},\mathbf{j})$-th block of
${\cal A}_{\mathbf{\smallR} \times \mathbf{\smallC}}$ is given by
\begin{equation}
\left( {\cal A}_{\mathbf{\smallR} \times \mathbf{\smallC}}\right)_{\mathbf{i},\mathbf{j}}
\;=\;y_{\mathbf{i}}\:z_{\mathbf{j}}^{T} .
\end{equation}
On the other hand, from (3.17)
\begin{eqnarray*}
({\cal A}_{\mathbf{k}})_{\mathbf{r}\times \mathbf{c}} &=&
\left(
a^{(1)}_{k_{1}} \circ \cdots \circ a^{(d)}_{k_{d}}
\right)_{\mathbf{r}\times \mathbf{c}}\\
&=&
\mbox{vec}\left(a^{(r_{1})}_{k_{r_{1}}} \circ \cdots \circ a^{(r_{e})}_{k_{r_{e}}}\right) \cdot \mbox{vec}\left(a^{(c_{1})}_{k_{c_{1}}} \circ \cdots \circ a^{(c_{d-e})}_{k_{c_{d-e}}}\right)^{T}\rule{0pt}{20pt}.
\end{eqnarray*}
It follows from (3.18)-(3.20) that if $\mathbf{i} = \mathbf{k}(\mathbf{r})$ and
 $\mathbf{j} = \mathbf{k}(\mathbf{c})$, then
\[
({\cal A}_{\mathbf{k}})_{\mathbf{r}\times \mathbf{c}} \;=\; y_{\mathbf{i}} \: z_{\mathbf{j}}^{T}
\;=\; \left( {\cal A}_{\mathbf{\smallR} \times \mathbf{\smallC}}\right)_{\mathbf{i},\mathbf{j}}
\]
which completes the proof.
\end{proof}

\medskip

 To illustrate the theorem, suppose $\cal A$ is 2-by-4-by-3-by-2 block tensor. 
If  $\mathbf{r} = [1\;3]$ and   $\mathbf{c} = [2\;4]$, then
\[
{\cal A}_{\mathbf{\smallR}\times \mathbf{\smallC}} \;=\;
\left[
\begin{array}{cccccccc}
\widetilde{\cal A}_{1111} & 
\widetilde{\cal A}_{1211} & 
\widetilde{\cal A}_{1311} & 
\widetilde{\cal A}_{1411} & 
\widetilde{\cal A}_{1112} & 
\widetilde{\cal A}_{1212} &
\widetilde{\cal A}_{1312} & 
\widetilde{\cal A}_{1412} \\
\widetilde{\cal A}_{2111} & 
\widetilde{\cal A}_{2211} & 
\widetilde{\cal A}_{2311} & 
\widetilde{\cal A}_{2411} & 
\widetilde{\cal A}_{2112} & 
\widetilde{\cal A}_{2212} &
\widetilde{\cal A}_{2312} & 
\widetilde{\cal A}_{2412} \rule{0pt}{13pt}\\
\widetilde{\cal A}_{1121} & 
\widetilde{\cal A}_{1221} & 
\widetilde{\cal A}_{1321} & 
\widetilde{\cal A}_{1421} & 
\widetilde{\cal A}_{1122} & 
\widetilde{\cal A}_{1222} &
\widetilde{\cal A}_{1322} & 
\widetilde{\cal A}_{1422} \rule{0pt}{13pt} \\
\widetilde{\cal A}_{2121} & 
\widetilde{\cal A}_{2221} & 
\widetilde{\cal A}_{2321} & 
\widetilde{\cal A}_{2421} & 
\widetilde{\cal A}_{2122} & 
\widetilde{\cal A}_{2222} &
\widetilde{\cal A}_{2322} & 
\widetilde{\cal A}_{2422} \rule{0pt}{13pt} \\
\widetilde{\cal A}_{1131} & 
\widetilde{\cal A}_{1231} & 
\widetilde{\cal A}_{1331} & 
\widetilde{\cal A}_{1431} & 
\widetilde{\cal A}_{1132} & 
\widetilde{\cal A}_{1232} &
\widetilde{\cal A}_{1332} & 
\widetilde{\cal A}_{1432} \rule{0pt}{13pt}\\
\widetilde{\cal A}_{2131} & 
\widetilde{\cal A}_{2231} & 
\widetilde{\cal A}_{2331} & 
\widetilde{\cal A}_{2431} & 
\widetilde{\cal A}_{2132} & 
\widetilde{\cal A}_{2232} &
\widetilde{\cal A}_{2332} & 
\widetilde{\cal A}_{2432} \rule{0pt}{13pt}
\end{array} \right]
\begin{array}{l}
\scriptstyle(1,1) \rule{0pt}{13pt}\\ \scriptstyle(2,1)\rule{0pt}{13pt} \\ \scriptstyle(1,2) \rule{0pt}{13pt} \\ \scriptstyle(2,2) \rule{0pt}{13pt}\\ \scriptstyle(1,3) \rule{0pt}{13pt} \\ \scriptstyle (2,3) \rule{0pt}{13pt}\end{array}
\]
\vspace*{-.15in}
\[
\;\;\;\;\;\;\begin{array}{llllllll}
\scriptstyle (1,1)\; \,& \;\,\scriptstyle (2,1) \;\,& \;\,\scriptstyle(3,1) \;\,& \;\,\scriptstyle(4,1)\;\, & \;\,
\scriptstyle(1,2) \;\,&\;\, \scriptstyle(2,2)\;\, &\;\, \scriptstyle(3,2)\; \,& \;\,\scriptstyle(4,2) 
\end{array}
\]
where $\widetilde{\cal A}_{\alpha\beta\gamma\delta} \;=\; ({\cal A}_{\alpha\beta\gamma\delta})_{\mathbf{r}\times \mathbf{c}}$. Note the multi-indexing of the block rows and columns.

\subsection{A Special Case}
Returning to the second example in \S2.4, suppose 
\[
{\cal B} \;=\; B^{(1)}\circ \cdots \circ B^{({d}\,)}
\]
where 
\[
\inm{B^{(\ell)}}{q_{\ell}}{n_{\ell}}
\]
for $\ell=1,\ldots,{d}$.
Assume that
$[\mathbf{u}^{(\ell)},\mathbf{v}^{(\ell)}]$ is a blocking for $B^{(\ell)}$ and
note that
\begin{equation}
\mathbf{M} \;=\; \left\{ \mathbf{u}^{(1)},\mathbf{v}^{(1)},\ldots,
\mathbf{u}^{({d}\,)},\mathbf{v}^{({d}\,)} \right\}
\end{equation}
is a blocking for $\cal B$.
Let $B^{(\ell)}_{\mu,\tau}$ denote block $(\mu,\tau)$ of
$B^{(\ell)}$.  If
\[
\mathbf{k} \;=\;[\:i_{1},j_{1},\ldots,i_{{d}},j_{{d}}\:]
\]
then the $\mathbf{k}$-th block of $\cal B$ is given by
\[
{\cal B}_{\mathbf{k}} \;=\; B^{(1)}_{i_{1},j_{1}} \circ \cdots \circ B^{({d}\,)}_{i_{{d}},j_{{d}}}.
\]
If 
\begin{eqnarray}
\mathbf{r} &=& 1:2:2d \nonumber \\
\mathbf{c} &=&  2:2:2d  \rule{0pt}{14pt} \nonumber \\
\mathbf{R} &=&  \left\{ \mathbf{u}^{(1)},\ldots,
\mathbf{u}^{({d}\,)} \right\} \rule{0pt}{14pt}
\\
\mathbf{C} &=&  \left\{\mathbf{v}^{(1)},\ldots,\mathbf{v}^{({d}\,)} \right\},
\rule{0pt}{16pt}
\end{eqnarray}
then by applying (3.17) and (2.25) we see that
\begin{equation}
\left( {\cal B}_{\mathbf{\smallR}\times\mathbf{\smallC}}\right)_{\mathbf{i},\mathbf{j}}
\;=\; \left( B^{(1)}_{i_{1},j_{1}} \circ \cdots \circ B^{({d}\,)}_{i_{{d}},j_{{d}}}
\right)_{\mathbf{r}\times\mathbf{c}}
\;=\;
B^{({d}\,)}_{i_{{d}},j_{{d}}} \T \cdots \T B^{(1)}_{i_{1},j_{1}} .
\end{equation}
Here, the notation $\left( {\cal B}_{\mathbf{\smallR}\times\mathbf{\smallC}}\right)_{\mathbf{i},\mathbf{j}}$ denotes
block $(ivec(\mathbf{i},\mathbf{q}),ivec(\mathbf{j},\mathbf{n}))$.
This result is key to the development of a block-level 
multilinear product which we pursue in \S4.2.

\section{Blocked Contractions}
We next apply our block tensor ``technology'' to the problem of computing a
contraction between two tensors. A multi-index summation notation will be used 
to describe the summations. If $\mathbf{n}$ is a length-$d$
index vector, then
\[
\sum_{\mathbf{i}=\mathbf{1}}^{\mathbf{n}} \;\equiv\; 
\sum_{i_{1}=1}^{n_{1}} \cdots \sum_{i_{d}=1}^{n_{d}}.
\]
\subsection{The General Case}
It is instructive to work through a small, motivating example
before we present the main results. 
Suppose we are given \inTens{F}{}{\alpha_{1}}{\alpha_{4}}
and  \inTens{G}{}{\beta_{1}}{\beta_{5}} and wish to compute the
order-5 tensor \inv{\cal H}{\alpha_{3}\times\alpha_{4}\times\beta_{3}\times\beta_{4}\times\beta_{5}} defined by
\begin{equation}
{\cal H}(i_{1},i_{2},j_{1},j_{2},j_{3}) \;=\; \sum_{k_{1}=1}^{\alpha_{3}} \sum_{k_{2}=1}^{\alpha_{4}}
{\cal F}(i_{1},i_{2},k_{1},k_{2}) \cdot
{\cal G}(k_{1},k_{2},j_{1},j_{2},j_{3}) .
\end{equation}
Of course, for this to make sense, we must have $\alpha_{3} = \beta_{1}$ and $\alpha_{4} = \beta_{2}$.
It is well known that a tensor contraction such as this can be ``reshaped'' into a single matrix-matrix multiplication. To see this we  rewrite  (4.1) using multi-index notation,
\begin{equation}
{\cal H}(\mathbf{i},\mathbf{j}) \;=\; 
\sum_{\mathbf{k}=\mathbf{1}}^{{\scriptsize\boldalpha}(3:4)} 
{\cal F}(\mathbf{i},\mathbf{k}) \cdot
{\cal G}(\mathbf{k},\mathbf{j}) .
\end{equation}
Define the index vectors
\begin{equation*}
\boldsymbol{r}\: =\: [ \:{1} \; {2}\:] \qquad
\boldsymbol{\lambda}\: =\: [ \:{3} \; {4}\:] \qquad
 \boldsymbol{\psi}\: = \:[\: {1} \; {2}\:] \qquad
\mathbf{c}\: =\: [ \:{3} \; {4} \; {5}\:] 
\end{equation*}
and note that $\mathbf{1}\leq \mathbf{i} \leq \boldsymbol{\alpha}(\mathbf{r})$ and $\mathbf{1}\leq \mathbf{j} \leq \boldsymbol{\beta}(\mathbf{c})$ in (4.2). Recall from (2.17)-(2.20) that the rows and columns of a tensor unfolding are vecs of reduced-order subtensors. In particular
\begin{eqnarray*}
{\cal F}_{\mathbf{r}\times {{\boldsymbol{\lambda}}}}(\mathbf{i},:) &=& \mbox{vec}({\cal F}^{(\mathbf{i})})^{T}\\
{\cal G}_{{\boldsymbol{\psi}}\times\mathbf{c}}(:,\mathbf{j}) &=& \mbox{vec}({\cal G}^{(\mathbf{j})})
\rule{0pt}{13pt}
\end{eqnarray*}
where \inv{{\cal F}^{(\mathbf{i})}}{\alpha_{3}\times\alpha_{4}} and
\inv{{\cal G}^{(\mathbf{j})}}{\beta_{1}\times\beta_{2}} are defined by
\[
\begin{array}{lclclcl}
{\cal F}^{(\mathbf{i})}(k_{1},k_{2}) \!&\!=\!&\! {\cal F}(i_{1},i_{2},k_{1},k_{2}) & & \mathbf{i} \!&\!=\!&\![\:i_{1} \: i_{2}\:]\\
{\cal G}^{(\mathbf{j})}(k_{1},k_{2}) \!&\!=\!&\! {\cal G}(k_{1},k_{2},j_{1},j_{2},j_{3}) \rule{0pt}{13pt}& & \mathbf{j} \!&\!=\!&\![\:j_{1} \: j_{2}\:j_{3}\:].
\end{array}
\]
It follows from (4.2) that
\[
{\cal H}(\mathbf{i},\mathbf{j}) \;=\; 
\sum_{k_{1}=1}^{\alpha_{1}} \sum_{k_{2}=1}^{\alpha_{2}}
{\cal F}^{(\mathbf{i})}(k_{1},k_{2})\cdot {\cal G}^{(\mathbf{j})}(k_{1},k_{2}) \;=\;
{\cal F}_{  \mathbf{r}\times \boldsymbol{\lambda}}(\mathbf{i},:)\!\cdot
{\cal G}_{\boldsymbol{\psi}\times\mathbf{c}}(:,\mathbf{j})
\]
and thus 
\[
%{\cal H}_{\tilde{\br}\times\tilde{\bc}} =
{\cal H}_{[1\:2]\times[3\:4\:5]} \;=\;
{\cal F}_{\mathbf{r}\times \boldsymbol{\lambda}}\!\! \cdot 
{\cal G}_{\boldsymbol{\psi}\times\mathbf{c}}.
\]
In this example, the summation is over the last two modes of $\cal F$ and the first two modes of
$\cal G$. These are convenient locations for the summation indices because the contraction
$\cal H$ is then easily seen to be ``isomorphic'' to a matrix-matrix product of simple tensor unfoldings.

If the summation modes are arbitrarily positioned, then they can be 
moved to these friendly locations through transposition. This result is widely known and
exploited, e.g., \cite{TCE, koldaToolbox}. Nevertheless,  in keeping with the spirit of
this paper we think that it is useful to include a 
formal verification of this important maneuver.

\medskip

\begin{theorem}
Suppose \inTens{F}{}{\alpha_{1}}{\alpha_{f+\ell}}, \inTens{G}{}{\beta_{1}}{\beta_{g+\ell}},
and that $\mathbf{p}$ and $\mathbf{q}$ are permutations of
$1:f\!+\!\ell$ and $1:g\!+\!\ell$ respectively. Define
\begin{equation*}
\begin{array}{lclclcl}
\mathbf{r} \!&\!=\!&\! \mathbf{p}(1:f) &\quad &
\boldsymbol{\lambda} \!&\!=\!&\! \mathbf{p}((f\!+\!1):(f\!+\!\ell) )
    \\
\boldsymbol{\psi} \!&\!=\!&\! \mathbf{q}(1:\,\ell)  &  &  \mathbf{c}\!&\!=\!&\! \mathbf{q}((\,\ell\!+\!1):(\ell\!+\!g))  \rule{0pt}{14pt}
\end{array}
\end{equation*}
and assume $\mathbf{\boldalpha}(\mathbf{\boldlambda}) = \mathbf{\boldbeta}({\boldpsi})$.
If \inv{\cal H}{\alpha_{r_{1}}\times\cdots \times\alpha_{r_{f}}\times
\beta_{c_{1}}\times\cdots \times\beta_{c_{g}}}
is defined by
\begin{equation}
{\cal H}(\mathbf{i},\mathbf{j}) \;=\; \sum_{\mathbf{k}=\mathbf{1}}^{{\scriptsize\boldalpha}({\boldlambda})}
{\cal F}^{<\mathbf{p}>}(\mathbf{i},\mathbf{k})\:{\cal G}^{<\mathbf{q}>}(\mathbf{k},\mathbf{j})
\qquad \quad \mathbf{1}\leq \mathbf{i} \leq {\boldalpha}(\mathbf{r}),\;\;\;
\mathbf{1}\leq \mathbf{j} \leq {\boldbeta}(\mathbf{c}),
\end{equation}
then
\begin{equation}
{\cal H}_{[1:f]\times [f+1:f+g]} \;=\;{\cal F}_{\boldsymbol{\mathbf{r}\times \lambda}} \! \! \cdot
{\cal G}_{\boldsymbol{\psi}\times\mathbf{c}}.
\end{equation}
\end{theorem}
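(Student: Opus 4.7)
The plan is to reduce the general theorem to the motivating ``friendly case'' that was worked out informally just before the statement, in which the contracted modes already occupy the trailing modes of $\cal F$ and the leading modes of $\cal G$. To that end, I introduce the transposed tensors $\widetilde{\cal F} = {\cal F}^{<\mathbf{p}>}$ and $\widetilde{\cal G} = {\cal G}^{<\mathbf{q}>}$. By construction, $\widetilde{\cal F}$ has dimensions $\alpha_{r_{1}}\times\cdots\times\alpha_{r_{f}}\times\alpha_{\lambda_{1}}\times\cdots\times\alpha_{\lambda_{\ell}}$, while $\widetilde{\cal G}$ has dimensions $\beta_{\psi_{1}}\times\cdots\times\beta_{\psi_{\ell}}\times\beta_{c_{1}}\times\cdots\times\beta_{c_{g}}$, and the defining equation (4.3) can be rewritten as ${\cal H}(\mathbf{i},\mathbf{j}) = \sum_{\mathbf{k}} \widetilde{\cal F}(\mathbf{i},\mathbf{k})\,\widetilde{\cal G}(\mathbf{k},\mathbf{j})$, i.e., precisely the friendly-case contraction of $\widetilde{\cal F}$ and $\widetilde{\cal G}$.

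Next I would verify the friendly case. Using (2.17)--(2.20) with the identity permutation applied to $\widetilde{\cal F}$ (taking $e=f$) and to $\widetilde{\cal G}$ (taking $e=\ell$), the $\mathbf{i}$-th row of $\widetilde{\cal F}_{[1:f]\times[f+1:f+\ell]}$ is $\mbox{vec}(\widetilde{\cal F}^{(\mathbf{i})})^{T}$, where $\widetilde{\cal F}^{(\mathbf{i})}(\mathbf{k}) = \widetilde{\cal F}(\mathbf{i},\mathbf{k})$, and the $\mathbf{j}$-th column of $\widetilde{\cal G}_{[1:\ell]\times[\ell+1:\ell+g]}$ is $\mbox{vec}(\widetilde{\cal G}^{(\mathbf{j})})$, where $\widetilde{\cal G}^{(\mathbf{j})}(\mathbf{k}) = \widetilde{\cal G}(\mathbf{k},\mathbf{j})$. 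Since both vecs arrange their entries under the same index map $ivec(\cdot,\boldalpha(\boldlambda)) = ivec(\cdot,\boldbeta(\boldpsi))$ (equal by the hypothesis $\boldalpha(\boldlambda)=\boldbeta(\boldpsi)$), their dot product is exactly $\sum_{\mathbf{k}} \widetilde{\cal F}(\mathbf{i},\mathbf{k})\widetilde{\cal G}(\mathbf{k},\mathbf{j}) = {\cal H}(\mathbf{i},\mathbf{j})$, which is in turn the $(ivec(\mathbf{i},\boldalpha(\mathbf{r})),ivec(\mathbf{j},\boldbeta(\mathbf{c})))$ entry of ${\cal H}_{[1:f]\times[f+1:f+g]}$. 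Hence
\[
{\cal H}_{[1:f]\times[f+1:f+g]} \;=\; \widetilde{\cal F}_{[1:f]\times[f+1:f+\ell]} \cdot \widetilde{\cal G}_{[1:\ell]\times[\ell+1:\ell+g]}.
\]

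The final step is to recognize that the two matrices on the right are just ${\cal F}_{\mathbf{r}\times\boldlambda}$ and ${\cal G}_{\boldpsi\times\mathbf{c}}$. This is immediate from the definition (2.12)--(2.16) of an unfolding: by construction ${\cal F}_{\mathbf{r}\times\boldlambda}(\alpha,\beta) = {\cal F}^{<\mathbf{p}>}(i_{1},\ldots,i_{f},k_{1},\ldots,k_{\ell}) = \widetilde{\cal F}(\mathbf{i},\mathbf{k})$, and this is exactly the $(\alpha,\beta)$ entry of the identity-permutation unfolding $\widetilde{\cal F}_{[1:f]\times[f+1:f+\ell]}$. An identical remark applies to $\cal G$. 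Substituting completes the proof.

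I expect the only real obstacle to be notational bookkeeping — keeping the triple layer of index mappings (original indices, permuted indices via $\mathbf{p}$ and $\mathbf{q}$, and the linearized $ivec$ indices of the unfoldings) aligned, and in particular making precise use of the compatibility hypothesis $\boldalpha(\boldlambda) = \boldbeta(\boldpsi)$ so that the two occurrences of $\mathbf{k}$ in the sum really index entries of the same length-$\prod_{s}\alpha_{\lambda_{s}}$ vector. Once the transposition-to-friendly-case reduction is in place, no new ideas are required; the argument is a direct transcription of the motivating example preceded by an invocation of the $\mathbf{p}$-transpose identity ${\cal A}_{\mathbf{r}\times\mathbf{c}}(\alpha,\beta) = {\cal A}^{<\mathbf{p}>}(\cdots)$ from (2.12).
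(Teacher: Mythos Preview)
Your proposal is correct and follows essentially the same approach as the paper's proof: both arguments apply (2.17)--(2.20) to identify the rows of ${\cal F}_{\mathbf{r}\times\boldsymbol{\lambda}}$ and the columns of ${\cal G}_{\boldsymbol{\psi}\times\mathbf{c}}$ as vecs of reduced-order subtensors of ${\cal F}^{<\mathbf{p}>}$ and ${\cal G}^{<\mathbf{q}>}$, then observe that their dot product equals the sum defining ${\cal H}(\mathbf{i},\mathbf{j})$. The only cosmetic difference is that you introduce the intermediate names $\widetilde{\cal F}={\cal F}^{<\mathbf{p}>}$, $\widetilde{\cal G}={\cal G}^{<\mathbf{q}>}$ and defer the identification $\widetilde{\cal F}_{[1:f]\times[f+1:f+\ell]}={\cal F}_{\mathbf{r}\times\boldsymbol{\lambda}}$ to a separate final step, whereas the paper works directly with ${\cal F}_{\mathbf{r}\times\boldsymbol{\lambda}}$ and ${\cal G}_{\boldsymbol{\psi}\times\mathbf{c}}$ from the outset.
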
 \medskip
\begin{proof}
The assumption ${\boldalpha}({\boldlambda}) = {\boldbeta}({\boldpsi})$ ensures that the summations in (4.3) are well defined.
Using (2.17)-(2.20) we have
\begin{eqnarray*}
{\cal F}_{\mathbf{r}\times\boldsymbol{\lambda}}(\mathbf{i},:) &=& \mbox{vec}({\cal F}^{(\mathbf{i})})^{T}\\
{\cal G}_{\boldsymbol{\psi}\times\mathbf{c}}(:,\mathbf{j}) &=& \mbox{vec}({\cal G}^{(\mathbf{j})})
\rule{0pt}{13pt}
\end{eqnarray*}
where \inv{{\cal F}^{(\mathbf{i})}}{\alpha_{\lambda_{1}}\times\cdots \times \alpha_{\lambda_{\ell}}} and
\inv{{\cal G}^{(\mathbf{j})}}{\beta_{\psi_{1}}\times\cdots \times \beta_{\psi_{\ell}}} are defined by
\begin{eqnarray*}
{\cal F}^{(\mathbf{i})}(\mathbf{k}) &=& {\cal F}^{<\mathbf{p}>}(i_{1},\ldots,i_{f},k_{1},\ldots,k_{\ell})\\
{\cal G}^{(\mathbf{j})}(\mathbf{k}) &=& {\cal G}^{<\mathbf{q}>}(k_{1},\ldots,k_{\ell},j_{1},\ldots,j_{g}) \rule{0pt}{13pt}.
\end{eqnarray*}
It follows that for all $\mathbf{i}$ and $\mathbf{j}$ that satisfy
$\mathbf{1}\leq \mathbf{i} \leq {\boldalpha}(\mathbf{r})$ and
$\mathbf{1}\leq \mathbf{j} \leq {\boldbeta}(\mathbf{c})$ we have
\begin{eqnarray*}
{\cal H}(\mathbf{i},\mathbf{j}) &=& \sum_{\mathbf{k}=\mathbf{1}}^{\boldsymbol{\alpha}(\boldsymbol{\lambda})}
{\cal F}^{<\mathbf{p}>}(\mathbf{i},\mathbf{k})\cdot {\cal G}^{<\mathbf{q}>}(\mathbf{k},\mathbf{j})\\
& = &  \sum_{\mathbf{k}=\mathbf{1}}^{{\boldsymbol{\alpha}}(\boldsymbol{\lambda})} 
{\cal F}^{(\mathbf{i})}(\mathbf{k})\cdot {\cal G}^{(\mathbf{j})}(\mathbf{k}) 
\;=\;
{\cal F}_{\boldsymbol{\mathbf{r}\times \lambda}}(\mathbf{i},:)\!\cdot
{\cal G}_{\boldsymbol{\psi}\times\mathbf{c}}(:,\mathbf{j})
\end{eqnarray*}
which, using (2.14)-(2.18), implies (4.4).
\end{proof}

\bigskip

It is instructive to illustrate what the theorem ``says'' when $\mathbf{c} = \emptyset$.
Suppose ${\cal F} \in \mathbb{R}^{\alpha_1\times \cdots \times \alpha_5}$ and  ${\cal G} \in \mathbb{R}^{\beta_1\times \beta_2}$ with $\alpha_2=\beta_2$, $\alpha_3 = \beta_1$.
If the tensor ${\cal H} \in \mathbb{R}^{\alpha_5\times \alpha_1 \times \alpha_4}$ is defined by the contraction
\[
{\cal H}(i_1,i_2,i_3) \;=\; \sum_{\mathbf{k}=\mathbf{1}}^{\boldsymbol{\alpha}(2:3)} {\cal F}(i_2,k_1,k_2,i_3,i_1) {\cal G}(k_2,k_1),
\]
then in the notation of the theorem we have $f=3, \ell = 2, g=0$, 
$\mathbf{p} \:=\:[ \:5 \; 1\; 4\; 2\; 3\:]$, and $\mathbf{q} \:= \: [\: 2 \; 1 \:].$
It follows that
$\mathbf{r} \: = \: [ \: 5 \; 1 \; 4\:]$,
$\mathbf{c} \: = \: \emptyset$,
$\boldsymbol{\lambda} \: = \: [ \: 2 \; 3 \:]$, and
$\boldsymbol{\psi} \: =\: [\: 2 \; 1 \:]$. Thus, we may conclude from
(4.4) that
\[
{\cal H}_{[1:3]\times \emptyset} \;=\; \mbox{vec}({\cal H}) \;=\; {\cal F}_{[5\; 1\; 4]\times[2\; 3]} \cdot {\cal G}_{[2 \; 1]\times \emptyset} \\
\;=\; {\cal F}_{[5\; 1\; 4]\times[2\; 3]} \cdot \mbox{vec}( {\cal G}^T),
\]
a matrix-vector product.
\medskip

If the tensors $\cal F$ and $\cal G$ are ``blocked conformally'', then (4.3) can be reformulated as a product of two block matrices.

\medskip

\begin{corollary}
Assume that the notation and conditions of  Theorem 4.1 hold. Let
\begin{equation}
\mathbf{S} \;=\; \{\mathbf{s}^{(1)},\ldots,\mathbf{s}^{(f+\ell)}\} 
\end{equation}
be a {blocking} for   $\cal F$ and set
\[
\mathbf{R} \;=\; \{\mathbf{s}^{(r_{1})},\ldots,\mathbf{s}^{(r_{f})}\}
\qquad
\mathbf{\Lambda} \;=\;\{\mathbf{s}^{(\lambda_{1})},\ldots,\mathbf{s}^{(\lambda_{\ell})}\}.
\]
Likewise, let
\begin{equation}
\mathbf{T} \;=\; \{\mathbf{t}^{(1)},\ldots,\mathbf{t}^{(g+\ell)}\}
\end{equation}
 be a blocking for $\cal G$ and set
\[
\mathbf{\Psi} \;=\; \{\mathbf{t}^{(\psi_{1})},\ldots,\mathbf{t}^{(\psi_{\ell})}\} 
\qquad 
\mathbf{C} \;=\; \{\mathbf{t}^{(c_{1})},\ldots,\mathbf{t}^{(c_{g})}\}. \rule{0pt}{13pt}
\]
If
\vspace*{-.1in}
\begin{equation}
\mathbf{s}^{(\lambda_{k})} =  \mathbf{t}^{(\psi_{k})} \qquad k=1,\ldots,\ell
\end{equation}
then with respect to the tensor $\cal H$, 
$\mathbf{R}$ is a blocking for modes $1$ through $f$,
$\mathbf{C}$ is a blocking for modes $f+1$ through $f+g$, and 
\begin{equation}
{\cal H}_{{\mathbf{\smallR}}\times{\mathbf{\smallC}}} \;=\;
{\cal F}_{\mathbf{\smallR}\times\mathbf{\smallLambda}}\cdot
{\cal G}_{\mathbf{\smallPsi}\times\mathbf{\smallC}}.
\end{equation}
\end{corollary}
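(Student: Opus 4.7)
The plan is to reduce the claim to the unblocked Theorem 4.1 by conjugating each of the three matrices that appear by the permutations supplied by Theorem 3.3, and then exploiting the compatibility condition to make the two interior permutations cancel.

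First I would observe that $\mathbf{R}$ is a valid blocking of the first $f$ modes of $\mathcal{H}$, because the $k$-th such mode has size $\alpha_{r_k}$ and is partitioned by $\mathbf{s}^{(r_k)}$; the analogous statement for $\mathbf{C}$ on modes $f+1,\ldots,f+g$ is checked the same way. Once this is established, Theorem 3.3 applied in turn to $\mathcal{F}$, $\mathcal{G}$, and $\mathcal{H}$ yields permutation matrices $P_{\mathbf{R}}$, $P_{\mathbf{\Lambda}}$, $P_{\mathbf{\Psi}}$, $P_{\mathbf{C}}$ satisfying
\[
\mathcal{F}_{\mathbf{R}\times\mathbf{\Lambda}} \;=\; P_{\mathbf{R}}\,\mathcal{F}_{\mathbf{r}\times\boldsymbol{\lambda}}\,P_{\mathbf{\Lambda}}^{T}, \qquad
\mathcal{G}_{\mathbf{\Psi}\times\mathbf{C}} \;=\; P_{\mathbf{\Psi}}\,\mathcal{G}_{\boldsymbol{\psi}\times\mathbf{c}}\,P_{\mathbf{C}}^{T},
\]
\[
\mathcal{H}_{\mathbf{R}\times\mathbf{C}} \;=\; P_{\mathbf{R}}\,\mathcal{H}_{[1:f]\times[f+1:f+g]}\,P_{\mathbf{C}}^{T}.
\]
Forming the product of the first two expressions gives
\[
\mathcal{F}_{\mathbf{R}\times\mathbf{\Lambda}} \cdot \mathcal{G}_{\mathbf{\Psi}\times\mathbf{C}} \;=\; P_{\mathbf{R}}\,\mathcal{F}_{\mathbf{r}\times\boldsymbol{\lambda}}\,\bigl(P_{\mathbf{\Lambda}}^{T}P_{\mathbf{\Psi}}\bigr)\,\mathcal{G}_{\boldsymbol{\psi}\times\mathbf{c}}\,P_{\mathbf{C}}^{T}.
\]
If the interior factor $P_{\mathbf{\Lambda}}^{T}P_{\mathbf{\Psi}}$ is the identity, then Theorem 4.1 collapses the remaining product $\mathcal{F}_{\mathbf{r}\times\boldsymbol{\lambda}}\cdot\mathcal{G}_{\boldsymbol{\psi}\times\mathbf{c}}$ to $\mathcal{H}_{[1:f]\times[f+1:f+g]}$, and the third equation above delivers $\mathcal{H}_{\mathbf{R}\times\mathbf{C}}$.

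The substantive step is therefore the identification $P_{\mathbf{\Lambda}}=P_{\mathbf{\Psi}}$, and this is where I expect the main obstacle, though it is really a bookkeeping matter rather than a genuine mathematical hurdle. By hypothesis, $\mathbf{s}^{(\lambda_k)}=\mathbf{t}^{(\psi_k)}$ for $k=1,\ldots,\ell$, so $\mathbf{\Lambda}$ and $\mathbf{\Psi}$ are the same ordered collection of length-$\ell$ blocking vectors. Inspecting the recipe (3.10)-(3.12) for the permutation produced by Theorem 3.3, each factor $Q_k$ and each constituent $\Gamma^{(k)}_{j}$ is built solely from the block sizes $m^{(k)}_{j}$ and their partial products $\mathrm{vol}_{\mathbf{M}_{k-1}}(\mathbf{i})$, with no dependence on which ambient tensor the blocking is being applied to. Consequently the two permutations are defined by identical data, so $P_{\mathbf{\Lambda}}=P_{\mathbf{\Psi}}$ and $P_{\mathbf{\Lambda}}^{T}P_{\mathbf{\Psi}}=I$, completing the proof.

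As a sanity check on the structure one may also verify the identity block-by-block: Theorem 3.3 identifies the $(\mathbf{i}',\mathbf{j}')$ block of each side with an $\mathbf{r}\times\mathbf{c}$ (respectively $\mathbf{r}\times\boldsymbol{\lambda}$, $\boldsymbol{\psi}\times\mathbf{c}$) unfolding of an individual tensor block, after which block matrix multiplication together with Theorem 4.1 applied inside each block recovers the claim. This elementwise route yields the same conclusion but is more tedious, so the permutation-cancellation argument above is the preferred presentation.
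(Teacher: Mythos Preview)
Your proof is correct and follows essentially the same route as the paper's: apply Theorem~3.3 to obtain the three permutation conjugations, use the conformability condition to conclude $P_{\mathbf{\Lambda}}=P_{\mathbf{\Psi}}$, and then invoke Theorem~4.1 to collapse the inner product. Your explicit justification that $P_{\mathbf{\Lambda}}=P_{\mathbf{\Psi}}$ because the recipe (3.10)--(3.12) depends only on the blocking data is a welcome elaboration of what the paper states in a single line.
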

\begin{proof}
From Theorem 3.3 we have
\[
{\cal F}_{\mathbf{\smallR}\times \mathbf{\smallLambda}} \:=\:
P_{\mathbf{\smallR}}{\cal F}_{\mathbf{r}\times\boldsymbol{\lambda}}P_{\mathbf{\smallLambda}}^{T}\qquad \qquad
{\cal G}_{\mathbf{\smallPsi}\times \mathbf{\smallC}} \:=\:
P_{\mathbf{\smallPsi}}{\cal G}_{\boldsymbol{\psi}\times\mathbf{c}}P_{\mathbf{\smallC}}^{T}. 
\]
Since
$\{\mathbf{s}^{(r_{1})},\ldots,\mathbf{s}^{(r_{f})},\mathbf{t}^{(c_{1})},\ldots,\mathbf{t}^{(c_{g})}\}$ is a blocking for $\cal H$ we also have
\[
{\cal H}_{{\mathbf{\smallR}}\times {\mathbf{\smallC}}} \;=\; P_{\mathbf{\smallR}}\cdot {\cal H}_{[1\,:\,f]\times[f+1\,:\,f+g]}\,\cdot P_{\mathbf{\smallC}}^{T} .
\]
The conformability condition (4.7) implies
$P_{\mathbf{\smallLambda}} = P_{\mathbf{\smallPsi}}$ and so it follows from  (4.4) that
\begin{eqnarray*}
{\cal H}_{{\mathbf{\smallR}}\times{\mathbf{\smallC}}} &=&
P_{\mathbf{\smallR}}({\cal F}_{\mathbf{r}\times\boldsymbol{\lambda}}\cdot
{\cal G}_{\boldsymbol{\psi}\times\mathbf{c}})P_{\mathbf{\smallC}}^{T} \rule{0pt}{12pt}\\
&=& (P_{\mathbf{\smallR}}{\cal F}_{\mathbf{r}\times\boldsymbol{\lambda}}P_{\mathbf{\smallLambda}}^{T} )
(P_{\mathbf{\smallPsi}}{\cal G}_{\boldsymbol{\psi}\times\mathbf{c}}P_{\mathbf{\smallC}}^{T} ) \rule{0pt}{12pt}
\;=\; {\cal F}_{\mathbf{\smallR}\times\mathbf{\smallLambda}}\cdot
{\cal G}_{\mathbf{\smallPsi}\times\mathbf{\smallC}} \rule{0pt}{12pt}
\end{eqnarray*}
completing the proof.
\end{proof}

\medskip

\noindent
Thus, the tensor $\cal H$ in (4.3) can be computed  as either a matrix product (4.4) or as  a block matrix product (4.8). For the latter case,
we develop recipes for the blocks
of ${\cal H}_{{\mathbf{\smallR}}\times{\mathbf{\smallC}}}$. Let $b^{(\mathbf{\smallS})}_{j}$ 
be the length of the blocking vector  $\mathbf{s}^{(j)}$ in (4.5)  and
let $b^{(\mathbf{\smallT})}_{j}$ be the length of the blocking vector $\mathbf{t}^{(j)}$
in (4.6). Note that if
\[
\begin{array}{lclclcl}
b_{rows}^{({\cal F})} &\!\!=\!\!& b^{(\mathbf{\smallS})}_{r_{1}}\cdots b^{(\mathbf{\smallS})}_{r_{f}} &\quad &
b_{cols}^{({\cal F})} &\!\!=\!\!& b^{(\mathbf{\smallS})}_{\lambda_{1}}\cdots b^{(\mathbf{\smallS})}_{\lambda_{\ell}} \\
b_{rows}^{({\cal G})} &\!\!=\!\!& b^{(\mathbf{\smallT})}_{\psi_{1}}\cdots b^{(\mathbf{\smallT})}_{\psi_{\ell}} &\quad &
b_{cols}^{({\cal G})} &\!\!=\!\!& b^{(\mathbf{\smallT})}_{c_{1}}\cdots b^{(\mathbf{\smallT})}_{c_{g}}\rule{0pt}{16pt}
\end{array}
\]
then (4.7) implies $b_{cols}^{({\cal F})}  = b_{rows}^{({\cal G})}$
and we observe that
\[
\left\{
\begin{array}{l}
{\cal F}_{\mathbf{\smallR}\times\mathbf{\smallLambda}} \\
{\cal G}_{\mathbf{\smallLambda}\times\mathbf{\smallC}} \rule{0pt}{16pt}\\
{\cal H}_{{\mathbf{\smallR}}\times{\mathbf{\smallC}}}  \rule{0pt}{16pt}
\end{array}
\right\} \quad \mbox{is a } \quad \left\{
\begin{array}{l}
b_{rows}^{({\cal F})}\mbox{-by-}b_{cols}^{({\cal F})}\\
b_{rows}^{({\cal G})}\mbox{-by-}b_{cols}^{({\cal G})} \rule{0pt}{16pt}\\
b_{rows}^{({\cal F})}\mbox{-by-}b_{cols}^{({\cal G})} \rule{0pt}{16pt}
\end{array}
\right\}
\quad \mbox{block matrix.}
\]
If $\mathbf{1}\leq \boldsymbol{\mu} \leq \mathbf{b}^{(\mathbf{\smallS})}(\mathbf{r})$
and
$\mathbf{1}\leq \boldsymbol{\tau} \leq \mathbf{b}^{(\mathbf{\smallT})}(\mathbf{c})$, $\mu = ivec(\boldsymbol\mu,\mathbf{b}^{(\mathbf{\smallS})}(\mathbf{r}))$ and $\tau = ivec(\boldsymbol\tau,\mathbf{b}^{(\mathbf{\smallT})}(\mathbf{c}))$,
then block $(\mu,\tau)$ of ${\cal H}_{{\mathbf{\smallR}}\times{\mathbf{\smallC}}}$ is given by
\[
\left( {\cal H}_{{\mathbf{\smallR}}\times{\mathbf{\smallC}}}\right)_{\boldsymbol{\mu},\boldsymbol{\tau}}
\;=\;\sum_{\mathbf{q}=\mathbf{1}}^{\mathbf{b}^{(\mathbf{\smallS})}(\boldsymbol{\lambda})}
\left( {\cal F}_{\mathbf{\smallR}\times\mathbf{\smallLambda}}\right)_{\boldsymbol{\mu},\mathbf{q}}
\left( {\cal G}_{\mathbf{\smallPsi}\times\mathbf{\smallC}}\right)_{\mathbf{q},\boldsymbol{\tau}}.
\]
Using (3.17) this can be rewritten in terms of subtensor unfoldings. Indeed, if
index vectors $\mathbf{k}$, $\mathbf{i}^{(\mathbf{q})}$, and $\mathbf{j}^{(\mathbf{q})}$
are defined by
\[
\begin{array}{lclclcl}
\mathbf{k}(\mathbf{r}) \!\!&\!\!=\!\!&\!\! \boldsymbol{\mu} &\quad & \mathbf{k}(\mathbf{c}) \!\!&\!\!=\!\!&\!\! \boldsymbol{\tau}\\
\mathbf{i}^{(\mathbf{q})}(\mathbf{r}) \!\!&\!\!=\!\!&\!\! \mathbf{k}(\mathbf{r}) &\quad &
\mathbf{i}^{(\mathbf{q})}(\boldsymbol{\lambda}) \!\!&\!\!=\!\!&\!\! \mathbf{q}\rule{0pt}{13pt}\\
\mathbf{j}^{(\mathbf{q})}(\boldsymbol{\psi}) \!\!&\!\!=\!\!&\!\! \mathbf{q} &\quad &
\mathbf{j}^{(\mathbf{q})}(\mathbf{c}) \!\!&\!\!=\!\!&\!\! \mathbf{k}(\mathbf{c}) \rule{0pt}{13pt}
\end{array}
\]
then
\begin{equation}\label{blockContr}
 \left({\cal H}_{\mathbf{k}}\right)_{[1:f]\times[f+1:f+g]}
\;=\;\sum_{\mathbf{q}=\mathbf{1}}^{\mathbf{b}^{(\mathbf{\smallS})}(\boldsymbol{\lambda})}
\left( {\cal F}_{\mathbf{i}^{\mathbf{(q})}}\right)_{\mathbf{r}\times\boldsymbol{\lambda}}
\left( {\cal G}_{\mathbf{j}^{\mathbf{(q})}}\right)_{\boldsymbol{\psi}\times\mathbf{c}}.
\end{equation}

\subsection{Blocked Multilinear Products}

As an example of how the preceding results can be adapted to handle structured contractions, we briefly consider the multilinear product since we have developed the supporting formulae in \S2.4 and \S3.4.  Suppose \inTens{A}{}{n_{1}}{n_{{d}}} and that 
\[
\inm{B^{(k)}}{q_{k}}{n_{k}} \qquad k=1,\ldots,{d}.
\]
The tensor \inTens{C}{}{q_{1}}{q_{{d}}} specified by
\begin{equation}
{\cal C}(\mathbf{i}) \;=\; \sum_{\mathbf{k}=\mathbf{1}}^{\mathbf{n}}
{\cal A}(\mathbf{k})
B^{(1)}(i_{1},k_{1})\cdots B^{({d}\,)}(i_{d},k_{{d}})
\end{equation}
is the {\em multilinear product} of $\cal A$ with $B^{(1)},\ldots,B^{({d}\,)}$
and is denoted \cite{lim} by
\[
{\cal C} \;=\; (B^{(1)},\ldots,B^{({d}\,)})\!\cdot\! {\cal A}.
\]
If the order-$(2{d})$ tensor $\cal B$ is defined by
\[
{\cal B} \;=\; B^{(1)} \circ \cdots \circ B^{(d)},
\]
then we see that $\cal C$ is a contraction of the form
\[
{\cal C}(\mathbf{i}) \;=\; \sum_{\mathbf{k}=\mathbf{1}}^{\mathbf{n}}
{\cal A}(\mathbf{k}){\cal B}(i_{1},k_{1},\ldots,i_{{d}},k_{{d}}).
\]
We apply Theorem 4.1 with ${\cal F} = {\cal B}$, $f = d$, $\ell = d$, ${\cal G} = {\cal A}$,
$g = 0$, $\mathbf{r} = 1:2:2d$, ${\boldlambda} = 2:2:2d$, 
$\boldsymbol{\psi} = 1:d,$ and $ \mathbf{c} = \emptyset$. It follows that
${\cal A}_{\boldsymbol{\psi}\times\mathbf{c}} = \mbox{vec}({\cal A})$ and
${\cal C}_{[1:\ell]\times [\ell+1:\ell]} = \mbox{vec}({\cal C})$
and so
 from Theorem 4.1 and (2.25) we have
\begin{equation}\label{bmult1}
\mbox{vec}({\cal C}) \;=\; \left( B^{(d)} \T \cdots \T B^{(1)}\right) \mbox{vec}({\cal A}).
\end{equation}
If the $B$ matrices are blocked according to (3.21) and $\mathbf{R}$ and $\mathbf{C}$ are defined by (3.22)-(3.23), then $\mathbf{R}$ is a blocking for $\cal C$, $\mathbf{C}$ is a blocking for $\cal A$, and
\begin{equation}
P_{{\mathbf{\smallR}}}\mbox{vec}({\cal C}) \;=\; \left(P_{\mathbf{\smallR}} \left( B^{(d)} \T \cdots \T B^{(1)}\right)P_{\mathbf{\smallC}}^{T}\right) P_{\mathbf{\smallC}} \mbox{vec}({\cal A}).
\end{equation}
From (3.24) we see that the matrix
\begin{equation}\label{bmult2}
{\cal B}_{\mathbf{\smallR}\times\mathbf{\smallC}} \;=\;
P_{\mathbf{\smallR}} \left( B^{(d)} \T \cdots \T B^{(1)}\right)P_{\mathbf{\smallC}}^{T}
\end{equation}
is a block matrix whose entries are Kronecker products. 
Indeed, 
${\cal B}_{\mathbf{\smallR}\times\mathbf{\smallC}}$
 is  essentially the Tracy-Singh product of the $B$-matrices, see \cite{Tracy}.
Thus, from \eqref{bmult1}-\eqref{bmult2} we have the following block specification for $\cal C$:
\begin{equation}\label{blockedmulti}
\mbox{vec}_{{{\mathbf{\smallR}}}}({\cal C}) \;=\; {\cal B}_{\mathbf{\smallR}\times\mathbf{\smallC}}
\mbox{vec}_{\mathbf{\smallC}}({\cal A}).
\end{equation}

\subsection{Visualization}
As in block matrix computations, it is sometimes important to view
a given blocked tensor contraction from different viewpoints. A small example
builds an appreciation for this point.

Suppose $\cal F$ is a $3 \times 4 \times 2$ block tensor and $\cal G$ is a $2 \times 3 \times 5$ block tensor such that the blockings in mode 3 in $\cal F$ and mode 1 in $\cal G$ conform. Let $\cal H$ be the $3 \times 4 \times 3 \times 5$ block tensor whose elements are given by
\[
\mathcal{H} (i_1,i_2,j_1,j_2) = \sum_{k} \mathcal{F}(i_1,i_2,k)\cdot \mathcal{G} (k,j_1,j_2).
\]
For convenience, denote the operation of contracting two order-3 tensors $\mathcal{T}_1$ and $\mathcal{T}_2$ in this way  as $\mathcal{T}_1\star \mathcal{T}_2$,
e.g., ${\cal H} = {\cal F} \star {\cal G}$.
Fig 4.1 shows how this blocked contraction can be visualized at three different levels.
At the lowest level,
block $[a,b,c,d]$ in $\cal H$ can be computed via the matrix equation
\[
(\mathcal{H}_{abcd})_{[1\; 2]\times[3\;4]} = (\mathcal{F}_{ab 1})_{[1\; 2] \times [3]} \cdot (\mathcal{G}_{1 cd})_{[1]\times [2\; 3]} + (\mathcal{F}_{ab 2})_{[1\; 2] \times [3]} \cdot (\mathcal{G}_{2 cd})_{[1]\times [2\; 3]}.
\]
 This follows from \eqref{blockContr} and is depicted in part (3) of Fig 4.1. 

\begin{figure}
\begin{tabular}{V  p{6.2cm}}
%\parbox[c]{1em}{
\scalebox{.65}{
\begin{picture}(260,120)(-10,0)
\put(125,30){\makebox(0,0){\Huge $\star$}}

%%%%%%%  F    %%%%%%%%%%%%%

\put(10,-10){\thicklines \line(1,0){80}}
\put(10,10){\thicklines \line(1,0){80}}
\put(10,30){\thicklines \line(1,0){80}}
\put(10,50){\thicklines \line(1,0){80}}
\put(10,-10){\thicklines \line(0,1){60}}
\put(30,-10){\thicklines \line(0,1){60}}
\put(50,-10){\thicklines \line(0,1){60}}
\put(70,-10){\thicklines \line(0,1){60}}
\put(90,-10){\thicklines \line(0,1){60}}
\put(10,50){\thicklines \line(1,1){20}}
\put(30,50){\thicklines \line(1,1){20}}
\put(50,50){\thicklines \line(1,1){20}}
\put(70,50){\thicklines \line(1,1){20}}
\put(90,50){\thicklines \line(1,1){20}}
\put(20,60){\thicklines \line(1,0){80}}
\put(30,70){\thicklines \line(1,0){80}}
\put(90,50){\thicklines \line(1,1){20}}
\put(90,30){\thicklines \line(1,1){20}}
\put(90,10){\thicklines \line(1,1){20}}
\put(90,-10){\thicklines \line(1,1){20}}
\put(100,0){\thicklines \line(0,1){60}}
\put(110,10){\thicklines \line(0,1){60}}

%%%%%%%  G    %%%%%%%%%%%%%
\put(140,-10){\thicklines \line(1,0){60}}
\put(140,10){\thicklines \line(1,0){60}}
\put(140,30){\thicklines \line(1,0){60}}
\put(140,-10){\thicklines \line(0,1){40}}
\put(160,-10){\thicklines \line(0,1){40}}
\put(180,-10){\thicklines \line(0,1){40}}
\put(200,-10){\thicklines \line(0,1){40}}
\put(140,30){\thicklines \line(1,1){50}}
\put(160,30){\thicklines \line(1,1){50}}
\put(180,30){\thicklines \line(1,1){50}}
\put(200,30){\thicklines \line(1,1){50}}
\put(200,30){\thicklines \line(1,1){50}}
\put(200,10){\thicklines \line(1,1){50}}
\put(200,-10){\thicklines \line(1,1){50}}
\put(210,0){\thicklines \line(0,1){40}}
\put(220,10){\thicklines \line(0,1){40}}
\put(230,20){\thicklines \line(0,1){40}}
\put(240,30){\thicklines \line(0,1){40}}
\put(250,40){\thicklines \line(0,1){40}}
\put(150,40){\thicklines \line(1,0){60}}
\put(160,50){\thicklines \line(1,0){60}}
\put(170,60){\thicklines \line(1,0){60}}
\put(180,70){\thicklines \line(1,0){60}}
\put(190,80){\thicklines \line(1,0){60}}

\end{picture}}
%}

& {\footnotesize $(1)$ The tensor contraction $\mathcal{H} = \mathcal{F} \star \mathcal{G}$ of two order-3 tensors viewed graphically as a contraction of conformally blocked tensors.} \\

%\parbox[c]{1em}{
\scalebox{.65}{
\begin{picture}(190,80)(-10,-10)

%%%%% SECOND PART %%%%%%%%%%%%%%%%%%%%%

\put(105,-10){\makebox(0,0){\Huge $\star$}}

%%%%%%%  F    %%%%%%%%%%%%%

\put(10,-60){\thicklines \line(1,0){40}}
\put(10,-20){\thicklines \line(1,0){40}}
\put(30,0){\thicklines \line(1,0){40}}
\put(50,20){\thicklines \line(1,0){40}}
\put(10,-60){\thicklines \line(0,1){40}}
\put(50,-60){\thicklines \line(0,1){40}}
\put(50,-20){\thicklines \line(1,1){40}}
\put(50,-60){\thicklines \line(1,1){40}}
\put(10,-20){\thicklines \line(1,1){40}}
\put(70,-40){\thicklines \line(0,1){40}}
\put(90,-20){\thicklines \line(0,1){40}}

%%%%%%%  G    %%%%%%%%%%%%%
\put(120,-60){\thicklines \line(1,0){40}}
\put(120,-20){\thicklines \line(1,0){40}}
\put(120,20){\thicklines \line(1,0){40}}
\put(120,-60){\thicklines \line(0,1){80}}
\put(160,-60){\thicklines \line(0,1){80}}
\put(120,20){\thicklines \line(1,1){20}}
\put(160,20){\thicklines \line(1,1){20}}
\put(160,20){\thicklines \line(1,1){20}}
\put(160,-20){\thicklines \line(1,1){20}}
\put(160,-60){\thicklines \line(1,1){20}}
\put(180,-40){\thicklines \line(0,1){80}}
\put(140,40){\thicklines \line(1,0){40}}

\end{picture}}
%}

&  {\footnotesize $(2)$ Block $\mathcal{H}_{abcd}=\mathcal{H}(\alpha_1:\alpha_2,\beta_1:\beta_2,\gamma_1:\gamma_2,\delta_1:\delta_2)$ is a $\star$-contraction of two ``block fibers'', one from $\cal F$ and one from $\cal G$, i.e.
$\mathcal{H}_{abcd} = \mathcal{F}(\alpha_1:\alpha_2,\beta_1:\beta_2,:)\star \mathcal{G}(:,\gamma_1:\gamma_2,\delta_1:\delta_2). $ }
 \\

%\parbox[c]{1em}{
\scalebox{.45}{
\begin{picture}(370,160)(-10,0)

%%%%%%%% PART THREE %%%%%%%%%%%%%%%%%%%

\put(85,40){\makebox(0,0){\Huge \scalebox{1.5}{$\star$}}}
\put(285,40){\makebox(0,0){\Huge \scalebox{1.5}{$\star$}}}
\put(185,40){\makebox(0,0){\Huge $+$}}

%%%%%%%  F1    %%%%%%%%%%%%%

\put(10,10){\thicklines \line(1,0){40}}
\put(10,50){\thicklines \line(1,0){40}}
\put(30,70){\thicklines \line(1,0){40}}
\put(10,10){\thicklines \line(0,1){40}}
\put(50,10){\thicklines \line(0,1){40}}
\put(50,50){\thicklines \line(1,1){20}}
\put(50,10){\thicklines \line(1,1){20}}
\put(10,50){\thicklines \line(1,1){20}}
\put(70,30){\thicklines \line(0,1){40}}

%%%%%%%  G1    %%%%%%%%%%%%%
\put(100,10){\thicklines \line(1,0){40}}
\put(100,50){\thicklines \line(1,0){40}}
\put(120,70){\thicklines \line(1,0){40}}
\put(100,10){\thicklines \line(0,1){40}}
\put(140,10){\thicklines \line(0,1){40}}
\put(140,50){\thicklines \line(1,1){20}}
\put(140,10){\thicklines \line(1,1){20}}
\put(100,50){\thicklines \line(1,1){20}}
\put(160,30){\thicklines \line(0,1){40}}

%%%%%%%  F2    %%%%%%%%%%%%%

\put(210,10){\thicklines \line(1,0){40}}
\put(210,50){\thicklines \line(1,0){40}}
\put(230,70){\thicklines \line(1,0){40}}
\put(210,10){\thicklines \line(0,1){40}}
\put(250,10){\thicklines \line(0,1){40}}
\put(250,50){\thicklines \line(1,1){20}}
\put(250,10){\thicklines \line(1,1){20}}
\put(210,50){\thicklines \line(1,1){20}}
\put(270,30){\thicklines \line(0,1){40}}

%%%%%%%  G2    %%%%%%%%%%%%%
\put(300,10){\thicklines \line(1,0){40}}
\put(300,50){\thicklines \line(1,0){40}}
\put(320,70){\thicklines \line(1,0){40}}
\put(300,10){\thicklines \line(0,1){40}}
\put(340,10){\thicklines \line(0,1){40}}
\put(340,50){\thicklines \line(1,1){20}}
\put(340,10){\thicklines \line(1,1){20}}
\put(300,50){\thicklines \line(1,1){20}}
\put(360,30){\thicklines \line(0,1){40}}

\end{picture}}
%}
&
{\footnotesize $(3)$ The $\star$-contraction of the two block fibers is  a sum of $\star$-contractions of fiber blocks, i.e.~$\mathcal{H}_{abcd} = \mathcal{F}_{ab 1}\star \mathcal{G}_{1 c d} + \mathcal{F}_{ab 2}\star \mathcal{G}_{2 c d}.$}
\end{tabular}

\caption{Three Levels of a Blocked Contraction}

\end{figure}

\section{Concluding Remarks}

Given the nature of this paper, it is important to be reminded in this closing section that there is a big difference
between a cryptic mathematical formula and its utilization in practice. A case in point is
the permutation matrix $P_{\mathbf{\smallM}}$ that is characterized in Theorem 3.1. Obviously, an integer vector should  be  used to represent a permutation matrix like $P_{\mathbf{\smallM}}$; it should never be computed as a two-dimensional array. We offer a few details based on the convention that
if
$P = I_{n}(:,\mathbf{v})$ where $\mathbf{v}$ is permutation of $1:n$, then $\mathbf{v}$ represents $P$.
We capture this connection with the notation $P_{\mathbf{v}}$. Note  that if $y = P_{\mathbf{v}}x$, then
$y = x(\mathbf{v})$ while $y(\mathbf{v}) = x$ implies $y = P_{\mathbf{v}}^{T}x$. Letting
$\mathbf{1}_{n}$ denote the $n$-vector of ones, here are some basic facts that concern this style of representation:
\begin{enumerate}
\item If $q$ and $r$ are positive integers and
$\mathbf{w} = [1:r:qr \; 2:2:qr \;\cdots \; r:r:qr]$,
then $P_{\mathbf{w}} \:=\: \Pi_{q,r}$, the $(q,r)$ perfect shuffle.
\item If $\mathbf{u}$ and $\mathbf{v}$ are permutations of $1:n$ and 
$\mathbf{w} = \mathbf{v}(\mathbf{u})$, then
$P_{\mathbf{w}} = P_{\mathbf{u}}P_{\mathbf{v}}$. 
\item
If $\mathbf{u}$ is a permutation of $1:n$ and $\mathbf{v}$ is a permutation of $1:m$,
then $P_{\mathbf{w}} = P_{\mathbf{u}} \T P_{\mathbf{v}}$ where
$\mathbf{w} = \mathbf{1}_{n} \T \mathbf{v} \:+\: m\cdot(\mathbf{u} - \mathbf{1}_{n}) \T \mathbf{1}_{m}$.
\item If $\mathbf{u}$ is a permutation of $1:n$ and $\mathbf{v}$ is a permutation of $1:m$,
then $P_{\mathbf{w}} = \mbox{diag}(P_{\mathbf{u}},P_{\mathbf{v}})$ where $\mathbf{w} = [\mathbf{u}\; \;(n\!\cdot\!\mathbf{1}_{n}+\mathbf{v})]$
\end{enumerate}
 The vector representation of the matrix $P_{\mathbf{\smallM}}$, since it is defined by perfect shuffles,
Kronecker products, and direct sums, can be efficiently assembled using these facts.

Another illustration of the gap between formula and implementation concerns equation (4.11).
The calculation of a multilinear product ${\cal C} = (B^{(1)},\ldots,B^{(d)})\cdot {\cal A}$ would not explicitly use this formula. Instead it would proceed as follows:

\medskip

\begin{code}
\> {\bf for} $i = 1,\ldots,d$\\
\>\> ${\cal A} \; \leftarrow \; (I_{n_1},\ldots,B^{(i)},\ldots,I_{n_d})\!\cdot\! {\cal A}\rule{0pt}{12pt}$\\
\> {\bf end}
\end{code}
The $i$-th update is referred to as the \emph{$i$-mode product}, see \cite{HOSVD,TammySirev}. 
By using Theorem 4.1 we see that this is equivalent to the matrix-matrix multiplication
\[
{\cal A}_{(i)}  \; \leftarrow \; B^{(i)} {\cal A}_{(i)}
\]
where ${\cal A}_{(i)}  \equiv  {\cal A}_{[ i ] \times [1:i-1 \; i+1:d]}$ is the mode-$i$ unfolding of $\cal A$ mentioned in \S2.3. 

Similarly, in a block-based implementation of the multilinear product, one would not directly use \eqref{blockedmulti}. Instead, the block-matrix multiplications
\[
{\cal A}_{\mathbf{\smallJ}\times \mathbf{\smallC}} \; \leftarrow \; B^{(i)} {\cal A}_{\mathbf{\smallI}\times\mathbf{\smallC}}
\]
would be carried out sequentially for modes $i=1,\ldots,d$. Here, $\mathbf{I}$ is the original blocking for mode $i$, $\mathbf{J}$ is the new blocking of mode $i$ inherited from the row blocking of $B^{(i)}$, and $\mathbf{C}$ is a blocking for modes $[1:i\!-\!1 \;\, i\!+\!1:d]$ of $\cal A$.

Overall, it is reasonable to conclude from the above that block tensors behave  in much the
same way as block matrices. Although the precise formulas are more involved, the basic intuition that ``all operations can be done at the block level'' is correct. 
By making precise the notion of a block unfolding and developing a  framework for reasoning about
block tensor computation, we hope that we have laid a modest foundation for further research.
Our own agenda includes looking at block versions of the tensor contraction engine
\cite{TCE}, developing recursive tensor data structures that extend the clever ideas in 
\cite{RecurBlock}, expanding the functionality of the  Tensor Toolbox \cite{koldaToolbox,koldaSoftware} so that it
supports block tensor computation, and analyzing block versions of various
 tensor iterations such as \cite{power}. Throughout all this it will be important to
chip away at the ``notational divide'' that currently besets the tensor computation community,
see \cite{Kiers}.

\bigskip

{\bf Acknowledgements.}  The authors are indebted to the two referees whose suggestions and corrections led to major improvements over the first and second  drafts of the manuscript.

\end{document}